\def\C{\mathbb{C}}
\def\H{\mathbb{H}}
\def\N{\mathbb{N}}
\def\Z{\mathbb{Z}}
\def\CP{\mathbb{P}_{\C}}
\def\reg{\text{reg.}}
\def\={=&\:}
\def\+{+&\:}
\def\-{-&\:}
\newcommand{\cyc}{\text{cyclic}}
\newcommand{\eps}{\varepsilon}
\newcommand{\rechts}{\rightarrow}
\newcommand{\cdim}{\text{dim}_{\C}}
\newcommand{\1}{\langle 1\rangle}
\newcommand{\T}{\langle T\rangle}
\newcommand{\tx}{\tilde{x}}
\newcommand{\ty}{\tilde{y}}
\newcommand{\nn}{\nonumber}
\newtheorem{theorem}{Theorem}
\newtheorem{lemma}{Lemma}
\newtheorem{cor}{Corollary}
\newtheorem{Def}{Definition}
\newtheorem{remark}{Remark}
\newtheorem{example}{Example}
\renewcommand\@biblabel[1]{#1.}
\begin{document}

\title{CFT on Riemann Surfaces of genus $g\geq 1$\footnote{The paper is published as \textit{Virasoro correlation functions on hyperelliptic Riemann surfaces}, 
Lett. Math. Phys. \textbf{103.7} (2013), 701--728 (DOI 10.1007/s11005-013-0608-7).}}

\author{Marianne Leitner\\
School of Mathematics,\\
Trinity College, Dublin 2, Ireland\\
and\\
Dublin Institute for Advanced Studies,\\
School of Theoretical Physics,\\
10 Burlington Road, Dublin 4, Ireland\\
\textit{leitner@stp.dias.ie}
}

\maketitle
\begin{abstract}
$N$-point functions of holomorphic fields in conformal field theories can be calculated by methods from algebraic geometry.
We establish explicit formulas for the $2$-point function of the Virasoro field on hyperelliptic Riemann surfaces of genus $g\geq 1$.
Virasoro $N$-point functions for higher $N$ are obtained inductively, and we show that they have a nice graph representation. 
We discuss the $3$-point function with application to the $(2,5)$ minimal model.
\end{abstract}

\maketitle

MSC(2010): Primary: 81T40, Secondary: 14H55, 14J17, 32C81 

Keywords: Conformal field theory, higher genus. 

\tableofcontents

\section{Introduction}

Quantum field theories are a major challenge for mathematicians. 
Apart from cases without interaction, the theories best understood at present are conformally invariant 
and do not contain massive particles.

Conformal Field Theories (CFTs) can be defined over arbitrary Riemann surfaces.
A theory is considered to be solved once all of its $N$-point functions are known.
We restrict our consideration to meromorphic CFTs \cite{Goddard: 1989} which are defined by holomorphic fields,
and a rather specific class of Riemann surfaces.

The case of the Riemann sphere $X_0$ is easy, 
and for the torus $X_1$, one can use the standard tools of doubly periodic and modular functions 
(\cite{Zhu:1996},\cite{DiFranc:1997} and more recently, e.g.\ \cite{DG:2009},\cite{NK:2009}).
The case $g>1$ is technically more demanding, however. 
Some progress has been made in the Vertex Operator Algebra (VOA) formalism
by sewing surfaces of lower genus.
There is no canonical way to do this and two different sewing procedures have been explored.
Explicit formulas could be extablished for the genus two $N$-point functions for the free bosonic Heisenberg VOA and its modules 
(\cite{MT:2010}, \cite{MT:2011}),
and for the free fermion vertex operator superalgebra \cite{TZ:2011}.

Instead, quantum field theory on a compact Riemann surface of any genus can be approached differently 
using methods from algebraic geometry (\cite{R:1989-91}, \cite{TUY:1989}, \cite{F-BZ:2004}) and complex analysis.
$N$-point functions of holomorphic fields are meromorphic functions.
That is, they are determined by their poles and their respective behaviour at infinity.
By compactness of $X_g$, these functions are rational.

The present paper establishes explicit formulas for the $2$-point functions of the Virasoro field
over hyperelliptic genus-$g$ Riemann surfaces $X_g$, where $g\geq 1$.
$N$-point functions for $N\geq 3$ are obtained inductively from these,
up to a finite number of parameters which in general cannot be determined by the methods presented in this paper.
In comparison, the formulas given by the work of Mason, Tuite, and Zuevsky 
determine all constants, but are given in terms of infinite series.

We show that the $N$-point functions can be written in terms of a list of oriented graphs.  
For $g=1$ the result reduces to a formula which is very similar to eq.\ (3.19) in \cite{HT:2011}.
The method we used is essentially the one developed in \cite{HT:2011}
though it was found independently.

Although we deal with the Virasoro field, our method applies to more general holomorphic fields.

\section{Notations}
 
For any $k\geq0$ and any rational function $R(x)$ of $x$ with Laurent expansion
\begin{displaymath}
R(x)=\sum_{i\in\Z}a_ix^{i} 
\end{displaymath}
for large $|x|$, we define the polynomial
\begin{align}\label{projection of a rational function onto a part of bounded degree}
[R(x)]_{>k}
:=\sum_{i>k}a_ix^i\:. 
\end{align}

\section{Rational coordinates}

Let $X_1$ be a compact Riemann surface of genus $g=1$.
Such a manifold is biholomorphic to the torus $\C/\Lambda$ (with the induced complex structure),
for the lattice $\Lambda$ spanned over $\Z$ by $1$ and some $\tau\in\H^+$,
unique up to an $SL(2,\Z)$ transformation. 
Here $\H^+$ denotes the upper complex half plane.
We denote by $z$ the local coordinate on $X_1$
and by $z_1,\ldots z_N$ the corresponding variables of the $N$-point functions on $X_1$ \cite{DiFranc:1997}.
The latter are elements of $\C(\wp(z_1),\wp'(z_1),\ldots,\wp(z_N),\wp'(z_N))/J$, 
where $\wp$ is the Weierstrass function associated to $\Lambda$,
$\wp'=\partial\wp/\partial z$, 
and $J$ is the ideal defined by $y^2=4(x^3-15G_4x-35G_6)$, where 
\begin{align}\label{definition of x and y for n=3}
x=\wp(z),\quad y=\wp'(z),\quad(\tau\:\text{fixed}). 
\end{align}
$G_{2k}$ for $k\geq 2$ are the holomorphic Eisenstein series.
Instead of $z$ we shall work with the pair of complex coordinates $x,y$ defined by (\ref{definition of x and y for n=3}).
We compactify $X_1$ by including the point $x=\infty$ (corresponding to $z=0$ mod $\Lambda$),
and view $x$ as a holomorphic function on $\C/\Lambda$ with values in $\CP^1$.
$y=\wp'(z)$ defines a double cover of $\CP^1$.
$N$-point functions can be expressed in terms of $\wp(z_1),\wp'(z_1),\ldots$, 
or equivalently as rational functions of $x_1,y_1,\ldots,x_N,y_N$.
The latter possibility generalizes much more easily to higher genus.
Instead, one can try to work with the Jacobian of the curve and the corresponding theta functions.
This would generalize to arbitrary curves, but it is unknown for which conformal field theories this is possible. 

If $g>1$, one can write $X_g$ as the quotient of $\H^+$ by a Fuchsian group, 
but working with a corresponding local coordinate $z$ becomes difficult 
(e.g.\ \cite{FK:1980}, and more recently \cite{Maskit:1999}). 
We shall consider \textit{hyperelliptic} Riemann surfaces $X_g$ only, where $g\geq 1$.
Such surfaces are defined by
\begin{align*}
X_g:\quad y^2=p(x),
\end{align*}
where $p$ is a polynomial of degree $n=2g+1$ (the case $n=2g+2$ is equivalent
and differs from the former by a rational transformation of $\C$ only).
We assume $p$ has no multiple zeros so that $X_g$ is \textit{regular}.    
Locally we will work with one complex coordinate, either $x$ or $y$.
A coordinate on $X_g$ out of the set of $x$ and $y$ is \textit{locally an admissible coordinate}
if it defines a chart.
$x$ is an admissible coordinate away from the ramification points (where $p=0$),
whereas $y$ is admissible away from the locus where $p'=0$.

\section{The Virasoro OPE}\label{section: The Virasoro OPE}

\subsection{The vector bundle of holomorphic fields}\label{introduction of vector bundle of fields}

For any Riemann surface $X$, the holomorphic fields of a meromorphic CFT on $X$ form a vector bundle $\mathcal{F}$ over $X$ 
whose trivialisation on parametrized open sets is canonical.
More specifically, let $(U,z)$ be a chart on $X$: The holomorphic map
\begin{align*}
U\:\overset{z}{\rechts}\:\C\: 
\end{align*}
is called a coordinate on $U$, and $U$ will be referred to as a \textit{coordinate patch}. 
We postulate that $(U,z)$ induces a trivialization
\begin{align*}
\mathcal{F}|_U\:\overset{z^*}{\rechts}\:F\times U\:.
\end{align*}
Here the fiber $F$ is the infinite dimensional complex vector space of holomorphic fields.
For $U'\subseteq U$, the trivialization corresponding to $(U',z)$ is induced by the one for $(U,z)$.
For any coordinate patch $U$ with coordinate $z$, elements of $\mathcal{F}|_U$ can be written as 
\begin{align*}
\varphi_z(u)
=(z^*)^{-1}(\varphi\times\{u\})\:,
\end{align*}
with $\varphi\in F$, $u\in U$.
Abusing notations, we shall simply write $\varphi(z)$ where we actually mean $\varphi_z(u)$.
(This will entail notations like $\hat{\varphi}(\hat{z})$ instead of $\varphi_{\hat{z}}(u)$ etc.). 
Thus an isomorphism between two coordinate patches on $X$ induces an isomorphism between the corresponding fields.  
On $\CP^1$ the bundle $\mathcal{F}$ splits into line bundles. 
The corresponding Chern numbers can be established to be the (holomorphic) dimensions of the fields.
For $\C\subset\CP^1$, the splitting of $\mathcal{F}$ induces a $\Z$ grading of the fiber $F$. 
Thus to every (nonzero) field $\varphi\in F$ there is associated the \textit{(holomorphic) dimension} $h(\varphi)$ of $\varphi$. 
We assume that 
\begin{align}\label{non-negativity of holomorphic dimension}
h(\varphi)\geq 0\:,\quad\forall\:\varphi\in F\:, 
\end{align}
so that
\begin{displaymath}
F=\bigoplus_{h\in\N_0}\:F(h)\:,
\end{displaymath}
where $F(0)\cong\C$ is spanned by the identity field $1$, 
and we assume that for any $h\in\N_0$, the dimension of $F(h)$ is finite.

We postulate that for any Riemann surface $X$ and any $u\in X$, 
the ascending filtration of the fiber $\mathcal{F}_u$ (of $\mathcal{F}$ in $u$) associated to the grading
does not depend on the choice of the coordinate.
Since in a conformal field theory fields of finite dimension only are considered,
it is sufficient to deal with finite sums.

It may be useful to compare our formalism to the approach by P.\ Goddard \cite{Goddard: 1989} 
where only the case $g=0$ is discussed in detail.
Goddard interprets $F$ as a dense subspace of a space of states $\mathcal{H}$ 
using the field-state correspondence. 
He works on $\C\subset\CP^1$. 
In our notation this corresponds to the identity map $\text{id}:U\rechts\C$.
Our field $\psi_{id}(z)$ is Goddard's $V(\psi,z)$.
We will not use the field-state correspondence and reserve the word \textit{state} for something different.
Our notion of state on a Riemann surface $X$ is a map $\langle\quad\rangle$ from products of fields 
$\Psi=\psi_{z_1}(p_1)\otimes\ldots\otimes\psi_{z_N}(p_N)$
to numbers $\langle\Psi\rangle\in\C$, in analogy to the language of operator algebra theory.  
We will not use the interpretation of fields as operators, however, 
since the necessary ordering is unnatural for $g>1$.  

\subsection{Meromorphic conformal field theories}

Let $X_g$ be a connected Riemann surface of genus $g\geq 1$
(when the genus is fixed, we shall simply refer to $X_g$ as $X$).
We don't give a complete definition of a \textit{meromorphic conformal field theory} here,
but the most important properties are as follows \cite{N:2009}:
\begin{enumerate}
\item\label{symmetry}
For $i=1,2$, let $X_i$ be a Riemann surface and let $\mathcal{F}_i$ be a rank $r_i$ vector bundle over $X_i$.
Let $p_i^*\mathcal{F}_i$ be the pullback bundle of $\mathcal{F}_i$ by the morphism $p_i:X_1\times X_2\rechts X_i$.
Let
\begin{align*}
\mathcal{F}_1\boxtimes\mathcal{F}_2
&:=p_1^*\mathcal{F}_1\otimes p_2^*\mathcal{F}_2 
\end{align*}
be the rank $r_1r_2$ vector bundle whose fiber at $(z_1,z_2)\in X_1\times X_2$ 
is $\mathcal{F}_{1,z_1}\otimes\mathcal{F}_{2,z_2}$.
We are now in position to define $N$-point functions for bosonic fields.
Let $\mathcal{F}$ be the vector bundle introduced in section \ref{introduction of vector bundle of fields}. 
A \textit{state} on $X$ is a multilinear map
\begin{align*}
\langle\quad\rangle:\quad\quad
S_*(\mathcal{F})\quad
&\rightarrow\quad\C,
\end{align*}
where $S_*(\mathcal{F})$ denotes the restriction of the symmetric algebra $S(\mathcal{F})$
to fibers away from the partial diagonals
\begin{align*}
\Delta_N
:=\{(z_1\ldots,z_N)\in X^N|\:z_i=z_j,\:\text{for some}\:i\not=j\},
\end{align*}
for any $N\in\N$. 
For ease of notations, 
when writing $\otimes$ and $\boxtimes$ we shall in the following actually mean the respective symmetrized product.

Locally, over  any $U^N\subseteq X^N\setminus\Delta_N$ such that $(U,z)$ defines a chart on $X$,
a state is the data for any $N\in\N$ 
of an $N$-linear holomorphic map 
\begin{align*}
\langle\quad\rangle:\quad\quad\quad\quad
F^{\otimes N}\times U^N\quad
&\rightarrow\quad\C\\
(\varphi_1,z_1)\boxtimes\ldots\boxtimes(\varphi_N,z_N)
&\mapsto
\langle\varphi_1(z_1)\otimes\ldots\otimes\varphi_N(z_N)\rangle
\end{align*}
satisfying the following conditions:
\begin{enumerate}
\item $\langle\quad\rangle$ is compatible with the Operator Product Expansion (OPE).
(The OPE is defined below in point \ref{definition of OPE}, and the compatibility condition is explained in point \ref{compatibility of state with OPE}.) 
\item
For $\varphi_1=1$, we have
\begin{displaymath}
\langle 1\otimes\varphi_2(z_2)\otimes\ldots\otimes\varphi_N(z_N)\rangle=\langle\varphi_2(z_2)\otimes\ldots\otimes\varphi_N(z_N)\rangle\:. 
\end{displaymath}
\end{enumerate}

\begin{remark}
In standard physics' notation the symbol for the symmetric tensor product is omitted.
We shall adopt this notation 
and write $$\langle\varphi_1(z_1)\ldots\varphi_N(z_N)\rangle$$ instead of $\langle\varphi_1(z_1)\otimes\ldots\otimes\varphi_N(z_N)\rangle$
but keep in mind that each $z_i$ lies in an individual copy of $U$
whence the $\varphi_i(z_i)$ are elements in different copies of $F$ and multiplication is meaningless. 
\end{remark}

Since each $\varphi_i$ is defined over $U$, we may view 
$\langle\varphi_1(z_1)\:\ldots\:\varphi_n(z_N)\rangle$ as a function of $(z_1,\ldots,z_N)\in U^N$.
We call it the $N$\textit{-point function} of the fields $\varphi_1,\ldots,\varphi_N$ over $U$.
For example, the zero-point function $\langle 1\rangle$ is a complex number.

\begin{remark}
One can make contact to the notion of $N$-point function used in \cite{Goddard: 1989}
by considering states for manifolds with boundary (see G.\ Segal's axioms) \cite{Segal:1989}.
\end{remark}

\item
Fields are understood by means of their $N$-point functions. 
A field $\phi$ is zero if all $N$-point functions involving $\phi$ vanish. 
That is, for any $N\in\N$, $N\geq 2$, and any set $\{\phi_2,\ldots\phi_N\}$ of fields,
\begin{displaymath}
\langle\phi(z_1)\:\phi_2(z_2)\:\ldots\:\phi_N(z_N)\rangle=0\:. 
\end{displaymath}

\item\label{definition of OPE}
We assume the existence of an OPE on $F$, i.e.\ for any $m\in\Z$ of a linear degree $m$ map
\begin{align*}
N_m:\quad F\otimes F\:\rechts\: F\:. 
\end{align*}
$N_m$ has degree $m$ if for any $\varphi_1,\varphi_2\in F$, 
$N_m(\varphi_1,\varphi_2)$ has holomorphic dimension 
\begin{displaymath}
m+h(\varphi_1)+h(\varphi_2)\:.
\end{displaymath}
Note that the degree condition is void when $N_m(\varphi_1,\varphi_2)$ is the zero field.

\begin{remark}
For $\varphi\in F$, the family of induced linear maps $N_m(\varphi,\:):F\rechts F$ indexed by $m\in\Z$
span a VOA (in particular a Lie algebra).
\end{remark}

\item\label{compatibility of state with OPE}
While fields and coordinates are local objects, states should contain global information. 
A state is said to be \textit{compatible with the OPE} if
for any $N\in\N$, $N\geq 2$, and whenever $\varphi_1,\ldots,\varphi_N$ are holomorphic fields over a coordinate patch $U\subset X$,
the corresponding $N$-point function has a Laurent series expansion in $z_1$ about $z_1=z_2$ given by
\begin{align*}
&\langle\varphi_1(z_1)\:\varphi_2(z_2)\ldots\:\varphi_N(z_N)\rangle\nn\\
&\quad\quad
=\sum_{m\geq m_0}(z_1-z_2)^m\langle N_m(\varphi_1,\varphi_2)(z_2)\:\varphi_3(z_3)\ldots\varphi_N(z_N)\rangle, 
\end{align*}
for some $m_0\in\Z$. 
Symbolically we write 
\begin{displaymath}
\varphi_1(z_1)\:\varphi_2(z_2)\mapsto
\sum_{m\geq m_0}(z_1-z_2)^m N_m(\varphi_1,\varphi_2)(z_2)\:. 
\end{displaymath}
This arrow defines the OPE of $\varphi_1,\varphi_2\in\mathcal{F}|_U$.  
We postulate that every OPE admits compatible states.

\begin{remark}
Physicists write an equality here. Recall however that $\otimes$ is understood on the l.h.s.
\end{remark}

\item
We have $N_m(\varphi,1)=0$ for $\varphi\in F$ and $m<0$.
Define the derivative of a field $\varphi$ by
\begin{displaymath}
\partial\varphi
:=N_1(\varphi,1)\:.  
\end{displaymath}
Equivalently, $\partial\varphi$ is defined by prescribing
\begin{displaymath}
\langle\partial\varphi(z)\varphi_2(z_2)\:\ldots\:\varphi_N(z_N)\rangle
:=\partial_z\langle\varphi(z)\:\varphi_2(z_2)\:\ldots\:\varphi_N(z_N)\rangle\:,
\end{displaymath}
for all $N$-point functions involving $\varphi$.

\item\label{existence of a Virasoro field} 
In conformal field theories, one demands the existence of a \textit{Virasoro field} $T\in F(2)$ 
which satisfies
\begin{displaymath}
N_{-1}(T,\varphi)=\partial\varphi\:,
\end{displaymath}
whenever $\varphi$ is a holomorphic section in $U\times F$. 
\end{enumerate}

The choice $\varphi_1=\varphi_2=T$ in point \ref{compatibility of state with OPE} yields the \textbf{Virasoro OPE}.
It is specified by the assumptions made in Section \ref{introduction of vector bundle of fields} 
and the properties \ref{symmetry}-\ref{existence of a Virasoro field} above:   

\begin{lemma}
In local coordinates $z$ and $w$, the Virasoro OPE has the form
\begin{align}\label{OPE}
T(z)T(w)
\mapsto
\frac{c/2}{(z-w)^4}\:1+\frac{1}{(z-w)^2}\left(T(z)+T(w)\right)+\Phi(w)+O(z-w)\:,
\end{align}
for some $c\in\C$. 
\end{lemma}

The constant $c$ is called the \textit{central charge} of the theory. 
Note that $$\Phi=N_0(T,T)-\frac{\partial^2T}{2}\:.$$

\begin{proof}
By assumption (\ref{non-negativity of holomorphic dimension}), all holomorphic fields have non-negative dimension, and $h(T)=2$.
This yields the lowest order term, since $F(0)$ is spanned by $1$.
Symmetry (point \ref{symmetry}) implies the existence of a field $\Omega$, of dimension $2$, such that
\begin{align*}
T(z)T(w)
\mapsto&\frac{\frac{c}{2}.1}{(z-w)^4}+\frac{\Omega(z)+\Omega(w)}{(z-w)^2}+O(1)\\
\=\frac{\frac{c}{2}.1}{(z-w)^4}+\frac{2\Omega(w)}{(z-w)^2}+\frac{\partial\Omega(w)}{(z-w)}+O(1)\:. 
\end{align*}
Thus $N_{-1}(T,T)=\partial\Omega$, from which (considering dimensions) we conclude $\Omega=T$.
\end{proof}

\begin{example}
A \textbf{Virasoro} model is minimal if it has only finitely many non-isomorphic irreducible representations of the VOA.
For the $(p,q)$ minimal model the number of such representations is (e.g., \cite{DiFranc:1997}, \cite{Blum:2009}) 
\begin{align*}
\frac{(p-1)(q-1)}{2}\:. 
\end{align*}
The $(2,5)$ minimal model has just two irreducible representations, 
the vacuum representation $\1$ for the lowest weight $h=0$, and one other for $h=-\frac{1}{5}$.
\end{example}

Let us recapitulate the behaviour of $T$ under coordinate transformations.

\begin{Def}
Given a holomorphic function $f$ (with non-vanishing first derivative $f'$), we define the \textbf{Schwarzian derivative} of $f$ by
\begin{align*}
S(f)
:=\frac{f'''}{f'}-\frac{3[f'']^2}{2[f']^2}\:.
\end{align*}
\end{Def}

The Schwarzian derivative $S$ has the following well-known properties:
\begin{enumerate}
\item\label{scale invariance of S} 
$S(\lambda f)=S(f)$, $\forall\:\lambda\in\C$, $f\in\mathcal{D}(S)$, the domain of $S$.
\item\label{S(lin.fract.)}
Suppose $f:P_{\C}^1\rechts P_{\C}^1$ is a linear fractional (M\" obius) transformation, 
\begin{align*}
f:\:z\mapsto f(z)=\frac{az+b}{cz+d}\:,
\quad\text{where}\:
\begin{pmatrix}
a&b\\
c&d
\end{pmatrix}
\in SL(2,\C). 
\end{align*}
Then $f\in\mathcal{D}(S)$, and $S(f)=0$. 
\item\label{chain rule for the Schwarzian}
Let $f,g\in\mathcal{D}(S)$ be such that $f\circ g$ is defined and lies in $\mathcal{D}(S)$.
Then 
\begin{align*}
S(f\circ g)
=\left[g'\right]^2S(f)\circ g+S(g)\:. 
\end{align*}
\end{enumerate}

\begin{remark}
Let $p,y\in\mathcal{D}(S)$ with $y^2=p(x)$. 
Then by the properties \ref{scale invariance of S} and \ref{chain rule for the Schwarzian} of the Schwarzian derivative,
\begin{align}\label{S(p)}
S(y)
=S(p)+\frac{3}{8}\left[\frac{p'}{p}\right]^2\:.
\end{align}
\end{remark}

Direct computation yields \cite{FS:1987}

\begin{lemma}
Let $T$ be the Virasoro field in the coordinate $x$.
We consider a coordinate change $x\mapsto \hat{x}(x)$ such that $\hat{x}\in\mathcal{D}(S)$, 
and set
\begin{align}\label{coordinate transformation rule for T}
\hat{T}(\hat{x})\left[\frac{d\hat{x}}{dx}\right]^2=T-\frac{c}{12}S(\hat{x}).1\:.
\end{align}
Then $\hat{T}$ satisfies the OPE (\ref{OPE}) in $\hat{x}$. 
\hfill$\Box$
\end{lemma}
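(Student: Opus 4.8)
The plan is to verify directly that the field $\hat{T}$ defined by (\ref{coordinate transformation rule for T}) reproduces the Virasoro OPE (\ref{OPE}) in the coordinate $\hat{x}$, given that $T$ satisfies it in $x$. First I would record what has to be shown: writing $\hat w = \hat x(w)$ and $\hat z = \hat x(z)$, one needs
\begin{align*}
\hat T(\hat z)\hat T(\hat w)\mapsto \frac{c/2}{(\hat z-\hat w)^4}\,1+\frac{1}{(\hat z-\hat w)^2}\bigl(\hat T(\hat z)+\hat T(\hat w)\bigr)+\hat\Phi(\hat w)+O(\hat z-\hat w).
\end{align*}
Since $\hat T$ is expressed through $T$ and the (ordinary, meromorphic) function $S(\hat x)$ by $\hat T(\hat x) = [\hat x']^{-2}\bigl(T - \tfrac{c}{12}S(\hat x)\bigr)$, the product $\hat T(\hat z)\hat T(\hat w)$ is a sum of three pieces: the $TT$ product, which we expand with (\ref{OPE}) in the $x$-coordinate; two cross terms linear in $T$ times a Schwarzian factor, which we expand with the OPE $N_{-1}(T,\varphi)=\partial\varphi$ together with the fact that $T$ has dimension $2$ (so its OPE with a dimension-$h$ field has the standard form $\tfrac{h}{(z-w)^2}\varphi(w)+\tfrac{1}{z-w}\partial\varphi(w)+\cdots$); and a $c$-number term quadratic in the Schwarzian, which is just an ordinary Taylor expansion.

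Next I would carry out the change of variables on each piece. The key computational inputs are the Taylor expansions $\hat z - \hat w = \hat x'(w)(z-w) + \tfrac12\hat x''(w)(z-w)^2+\cdots$ and its powers/reciprocals up to the needed order, and the transformation law (\ref{coordinate transformation rule for T}) itself, used to re-express $T(w)$, $T(z)$ and $\partial T(w)$ back in terms of $\hat T$ and derivatives of $\hat x$. The $\tfrac{c/2}{(z-w)^4}$ term will, after inserting $z-w$ in terms of $\hat z-\hat w$, produce the leading $\tfrac{c/2}{(\hat z-\hat w)^4}$ plus subleading terms whose singular parts must combine with contributions from the $\tfrac{1}{(z-w)^2}(T(z)+T(w))$ term and from the cross terms; this is exactly the classical computation showing that $-\tfrac{c}{12}S(\hat x)$ is the correct inhomogeneous term, and the Schwarzian is engineered so that all spurious poles of order $3$ and $1$ cancel. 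I would organize this by collecting, order by order in $(\hat z - \hat w)$, the coefficients of $(\hat z-\hat w)^{-4}, (\hat z-\hat w)^{-3}, (\hat z-\hat w)^{-2}, (\hat z-\hat w)^{-1}$ and the regular part, and checking each matches the right-hand side of the target OPE; the $(\hat z-\hat w)^{-3}$ and $(\hat z-\hat w)^{-1}$ coefficients vanishing is where the specific combination $\tfrac{f'''}{f'}-\tfrac32[f''/f']^2$ enters decisively.

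The main obstacle is bookkeeping: one must expand to high enough order (through the constant term, since $\hat\Phi$ sits there) and keep careful track of which terms are genuine fields versus $c$-numbers, since $\hat T(\hat z)$ must itself be re-expanded about $\hat w$ to read off the coefficient structure. A cleaner route, which I would mention as an alternative, is to use the chain rule for the Schwarzian (property \ref{chain rule for the Schwarzian}): the content of the lemma is that the map $T\mapsto \hat T$ is a (right) action, i.e.\ composing two coordinate changes $x\mapsto\hat x\mapsto\hat{\hat x}$ gives the same $\hat{\hat T}$ either in one step or two, and that the identity coordinate change acts trivially; granting that the OPE (\ref{OPE}) is coordinate-independent in form is then equivalent to the cocycle identity for $S$, which is precisely property \ref{chain rule for the Schwarzian} combined with $[g']^2$-homogeneity. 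However, since the lemma as stated only asks that $\hat T$ satisfy \emph{some} OPE of Virasoro type in $\hat x$, the direct expansion is the most transparent proof, and I expect the only real work to be the algebra verifying the cancellation of the odd-order poles.
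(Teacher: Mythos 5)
Your proposal is correct and coincides with the paper's approach: the paper's entire proof of this lemma is the phrase ``Direct computation,'' and your outline is a reasonable organization of exactly that computation (expand the $TT$ OPE in $x$, substitute the transformation law, convert $(z-w)$ to $(\hat z-\hat w)$, and check order by order that the Schwarzian cancels the spurious poles). The only minor quibble is that the cross terms $T\cdot S(\hat x)$ involve a numerical function times the identity field, so they contribute no singularities at all and need not be expanded via the dimension-$h$ OPE template; this does not affect the validity of your argument.
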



\begin{cor}\label{Corollary on two-form} 
Let $X_g$ be a Riemann surface of genus $g\geq 2$ with a complex projective coordinate covering
(i.e. a covering by coordinate patches whose respective local coordinates differ by a M\" obius transformation only).
Then for any state $\langle\quad\rangle$ on $X$, and for any local coordinate $x$ in this class, 
$\langle T(x)\rangle\:(dx)^2$ defines a global section of $(T^*X)^{\otimes 2}$.
\end{cor}

This section is \textit{holomorphic} by assumption.

\begin{proof}\textbf{(e.g.\ \cite{Zhu:1994})}
By property \ref{S(lin.fract.)} of the Schwarzian derivative, and by eq.\ (\ref{coordinate transformation rule for T}), 
\begin{displaymath}
\langle T(x)\rangle\:(dx)^2
=\langle\hat{T}(\hat{x})\rangle\:(d\hat{x})^2\:.\qedhere
\end{displaymath}
\end{proof}

Any compact Riemann surface $X_g$ of $g\geq 2$ admits a projective structure \cite{Gu:1966}. 
Moreover, there is a one-to-one correspondence between projective structures on $X$ and projective connections on $X$.
For any choice of a local coordinate $z$ and of a projective connection $\1^{-1}\mathcal{C}(z)$ on $X$, 
\begin{align}\label{difference of projective connections defines a global section}
\langle T(z)\rangle\:(dz)^2-\frac{c}{12}\:\mathcal{C}(z)\:\in\:H^0((T^*X)^{\otimes 2})\:,
\end{align}
i.e., the difference connection defines a global section in $(T^*X)^{\otimes 2}$ \cite{Gu:1966}.
By the Riemann-Roch Theorem (e.g.\ \cite{FK:1980}),
the affine linear space of projective connections has dimension
$$\cdim\:H^0((T^*X)^{\otimes 2})=3(g-1).$$


\begin{example}
Let $X_g$ be a Riemann surface of arbitrary genus. 
Let $T$ be defined by holomorphic fields of massless free fermions on $X_g$.
In this case, the projective connection $\1^{-1}\mathcal{C}$ is known as the Bergman projective connection 
(\cite{Haw-Schiff:1966},\cite{Fay:1973},\cite{R:1995}). 
\end{example}

\begin{example}
Let $g=1$.
Then $T^*X$ is trivial. When one uses local coordinates given by the affine structure on $X$ \cite{Gu:1966},
then $\langle T(z)\rangle$ is constant.
\end{example}

\section{The Virasoro $1$-point function}

Associate to the hyperelliptic surface $X$ its field of meromorphic functions $K=\C(x,y)/\langle y^2-p(x)\rangle$.
Then $K$ is a field extension of $\C$ of trancendence degree one,
and the two sheets (corresponding to the two signs of $y$) are exchanged by a Galois transformation.

In what follows, we set
\begin{align*}
p(x)=\sum_{k=0}^na_kx^{n-k}, 
\end{align*}
where $n=2g+1$, or $n=2g+2$. 
For convenience of application, we shall treat both cases separately throughout this section, though they are of course equivalent.

\begin{theorem}\label{theorem one-point function}
(\textbf{On the Virasoro one-point function})

For $g\geq 1$, let $X_g$ be the genus $g$ hyperelliptic Riemann surface
\begin{align*}
X:\quad y^2=p(x),
\end{align*}
where $p$ is a polynomial with $\deg p=n$.
\begin{enumerate}
\item\label{behaviour of one-point function in infinity}
As $x\rechts\infty$, 
\begin{align*}
\langle T(x)\rangle\sim\:& x^{-4}\:,\quad\quad\quad\quad\quad\quad\quad\quad\quad\text{for even}\:\:n\:,\\
\langle T(x)\rangle\=\frac{c}{32}\:x^{-2}\1+O(x^{-3})\:,\quad\quad\text{for odd}\:\:n\:.
\end{align*}
\item
We have
\begin{align}\label{p<T(x)>}
p\langle T(x)\rangle
=\frac{c}{32}\frac{[p']^2}{p}\1+\frac{1}{4}\Theta(x,y)\:,
\end{align}
where $\Theta(x,y)$ is a \textbf{polynomial} in $x$ and $y$. More specifically,
we have the Galois splitting
\begin{align}\label{Theta(even) and Theta(odd)}
\Theta(x,y)=\Theta^{[1]}(x)+y\Theta^{[y]}(x). 
\end{align}
Here $\Theta^{[1]}$ is a polynomial in $x$ of degree $n-2$ with the following property:
\begin{enumerate}
\item If $n$ is even, $\quad\left[\Theta^{[1]}+\frac{c}{8}\frac{[p']^2}{p}\1\right]_{>n-4}=0$.
\item\label{odd n}
If $n$ is odd, $\quad\left[\Theta^{[1]}+\frac{c}{8}\left(n^2-1\right)a_0x^{n-2}\1\right]_{>n-3}=0$.
\end{enumerate}
$\Theta^{[y]}$ is a polynomial in $x$ of degree $\frac{n}{2}-4$ if $n$ is even, 
resp.\ $\frac{n-1}{2}-3$ if $n$ is odd, provided $g\geq 3$.
\item\label{dim of space of Virasoro one-point function} 
Let $g\geq 2$. Then the space of $\langle T(x)\rangle$ has dimension $3(g-1)$. 
\end{enumerate}
\end{theorem}

\begin{remark}
The number of degrees of freedom 
in Theorem \ref{theorem one-point function}.\ref{dim of space of Virasoro one-point function} for $g\geq 2$
equals the dimension of the automorphism group of the Riemann surface $X_g$, which in genus $g=0$ and $g=1$
is $\cdim SL(2,\C)=3$ and $\cdim(\C,+)=1$, respectively. 
\end{remark}

\begin{proof}
\begin{enumerate}
\item 
For $x\rechts\infty$, we perform the coordinate change $x\mapsto\tx(x):=\frac{1}{x}$. 
By property \ref{S(lin.fract.)} of the Schwarzian derivative, $S(\tx)=0$ identically, and 
\begin{align*}
T(x)=\tilde{T}(\tx)\left[\frac{d\tx}{dx}\right]^2,
\end{align*}
where $\left[\frac{d\tx}{dx}\right]^2=x^{-4}$.
If $n$ is even, then $\tx$ is an admissible coordinate, so $\langle\tilde{T}(\tx)\rangle$ is holomorphic in $\tx$.
If $n$ is odd, then we may take $\ty:=\sqrt{\tx}$ as coordinate. 
$\frac{d\ty}{dx}=-\frac{1}{2}x^{-1.5}$, and according to eq.\ (\ref{coordinate transformation rule for T}) and eq.\ (\ref{S(p)}),
\begin{align}\label{behaviour of T in infinity if n is odd}
T(x)
=\frac{c}{32}\:x^{-2}+\frac{1}{4}\:\breve{T}(\ty)\:x^{-3},
\end{align}
where $\langle\breve{T}(\ty)\rangle$ is holomorphic in $\ty$. 
\item
$\langle T(x)\rangle$ is a meromorphic function of $x$ and $y$ over $\C$, whence rational in either coordinate.
The ring $\C[x,y]$ of polynomials in $x$ and $y$ is a vector space over the field of rational functions in $x$, 
spanned by $1$ and $y$. Thus we have a splitting
\begin{align*}
\langle T(x)\rangle
=\langle T(x)\rangle^{[1]}+y\:\langle T(x)\rangle^{[y]}.
\end{align*}
$\langle T(x)\rangle$ is $O(1)$ in $x$ 
iff this holds for its Galois-even and its Galois-odd part individually,
as there can't be cancellations between these. 
We obtain a Galois splitting for $\langle\hat{T}(y)\rangle$ by applying a rational transformation to $\langle T(x)\rangle$.
From (\ref{coordinate transformation rule for T}) and (\ref{S(p)}) follows 
\begin{align*}
p\langle T(x)\rangle^{[1]}
=&\frac{c}{32}\1\frac{[p'(x)]^2}{p(x)}+\frac{1}{4}\:\Theta^{[1]}(x),\\
p\langle T(x)\rangle^{[y]}
=&\frac{1}{4}\:\Theta^{[y]}(x),
\end{align*}
where $\Theta^{[1]}$ and $\Theta^{[y]}$ are rational functions of $x$.
We have
\begin{align*}
\frac{1}{4}&\Theta^{[1]}\\
&=
p\langle T(x)\rangle^{[1]}-\frac{c}{32}\1\frac{[p']^2}{p}
=\frac{1}{4}[p']^2\langle\hat{T}(y)\rangle^{[1]}+\frac{c}{12}\1pS(p).
\end{align*}
The l.h.s.\ is $O(1)$ in $x$ for finite $x$ and away from $p=0$ (so wherever $x$ is an admissible coordinate)
while the r.h.s.\ is holomorphic in $y(x)$ for finite $x$ and away from $p'=0$ (so wherever $y$ is an admissible coordinate).
The r.h.s.\ does not actually depend on $y$ but is a function of $x$ alone. 
Since the loci $p=0$ and $p'=0$ do nowhere coincide, we conclude that $\Theta^{[1]}$ is an \textit{entire} function on $\C$.
It remains to check that $\Theta^{[1]}$ has a pole of the correct order at $x=\infty$.
We have
\begin{align}\label{p' squared over p}
\frac{[p']^2}{p}
=n^2a_0x^{n-2}+n(n-2)a_1x^{n-3}+O(x^{n-4}).
\end{align}
\begin{enumerate}
\item
If $n$ is even, 
then $p\langle T(x)\rangle^{[1]}=O(x^{n-4})$ as $x\rechts\infty$, 
by part \ref{behaviour of one-point function in infinity}. 
By eqs (\ref{p<T(x)>}) and (\ref{p' squared over p}), 
$\Theta^{[1]}(x)$ has degree $n-2$ in $x$. Moreover,
\begin{align*}
\Theta^{[1]}&(x)\nn\\
=&-\frac{c}{8}\left(n^2a_0x^{n-2}+n(n-2)a_1x^{n-3}\right)\1+O(x^{n-4}).
\end{align*}
\item
If $n$ is odd, 
then $p \langle T(x)\rangle^{[1]}=\frac{c}{32}a_0x^{n-2}\1+O(x^{n-3})$ as $x\rechts\infty$, 
by eq.\ (\ref{behaviour of T in infinity if n is odd}).
Thus $\Theta^{[1]}$ has degree $n-2$ in $x$.
Moreover, by eq.\ (\ref{p<T(x)>}) and eq.\ (\ref{p' squared over p}),
\begin{align*}
\Theta^{[1]}(x)
&=-\frac{c}{8}\left(n^2-1\right)a_0x^{n-2}\1+O(x^{n-3}). 
\end{align*}
\end{enumerate}
Likewise, we have
\begin{align*}
\frac{1}{4}y\Theta^{[y]}(x)
=yp\langle T(x)\rangle^{[y]}
=\frac{1}{4}[p']^2y\langle\hat{T}(y)\rangle^{[y]};
\end{align*}
the l.h.s.\ is $O(1)$ in $x$ wherever $x$ is an admissible coordinate
while the r.h.s.\ is holomorphic in $y$ wherever $y$ is an admissible coordinate.
Since $y$ is a holomorphic function in $x$ and in $y$ away from $p=0$ and away from $p'=0$, respectively,
this is also true for 
\begin{align*}
\frac{1}{4}&p\Theta^{[y]}(x)
=p^2\langle T(x)\rangle^{[y]}
=\frac{1}{4}p[p']^2\langle\hat{T}(y)\rangle^{[y]}.
\end{align*}
Now the r.h.s.\ does no more depend on $y$ but is a function of $x$ alone, 
so the above argument applies to show that $p\Theta^{[y]}=:P$ is an entire function and thus a polynomial in $x$.
We have $p|P$:
\begin{displaymath}
\frac{P}{y}
=y\Theta^{[y]}(x)=y[p']^2\langle\hat{T}(y)\rangle^{[y]}
\end{displaymath}
is holomorphic in $y$ about $p=0$.
Since $P$ is a polynomial in $x$, and $p$ has no multiple zeros, 
we must actually have $y^2=p$ divides $P$. 
This proves that $\Theta^{[y]}$ is a polynomial in $x$. 
The statement about the degree follows from part \ref{behaviour of one-point function in infinity}.
\item 
This is a consequence of the Riemann-Roch Theorem, cf.\ Section \ref{section: The Virasoro OPE}. 
\end{enumerate}
\end{proof}

\begin{remark}
The main purpose of Theorem \ref{theorem one-point function} is to introduce the polynomial $\Theta$.
Part of the results actually follow from classical formulas for the projective connection.  
For instance, for $n$ even and $g\geq 3$, we have \cite{FK:1980}
\begin{align*}
p\langle T(x)\rangle\:(dx)^2
=\frac{c}{12}p\:\mathcal{C}(x)
+\1\sum_{i=0}^{2g-2}\alpha_ix^i(dx)^2
+y\1\sum_{j=0}^{g-3}\beta_jx^j(dx)^2\:
\end{align*}
in the notations of (\ref{difference of projective connections defines a global section}).
Here the projective connection $\1^{-1}\mathcal{C}$ on $X$ is given by
\begin{align*}
p\:\mathcal{C}(x)
=\frac{3}{8}\left[\frac{[p']^2}{p}\right]_{\leq n-4}\1(dx)^2\:,
\end{align*}
and
\begin{align*}
\left[\Theta^{[1]}(x)\right]_{\leq n-4}
\=4\1\sum_{i=0}^{2g-2}\alpha_ix^i\:,\quad
\Theta^{[y]}(x)
=4\1\sum_{j=0}^{g-3}\beta_jx^j\:.
\end{align*} 
Eq.\ (\ref{behaviour of T in infinity if n is odd}) (for odd $n$) follows from the formula for $\mathcal{C}(x)$ on p.\ 20 in \cite{Fay:1973}.
\end{remark}

\section{The Virasoro $2$-point function}

\subsection{Calculation of the $2$-point function}

For the polynomial $\Theta=\Theta^{[1]}+y\Theta^{[y]}$ defined by eqs\ (\ref{p<T(x)>}) and (\ref{Theta(even) and Theta(odd)}), we set
\begin{align*}
\Theta^{[1]}=\sum_{k=0}^{n-2}A_kx^{n-2-k},\quad\quad
A_k\in\C.\nn
\end{align*}
It will be convenient to replace $\Theta^{[1]}(x)=:-\frac{c}{8}\Pi(x)$ 
for which we introduce even polynomials $\Pi^{[1]}$ and $\Pi^{[x]}$ such that 
\begin{align}
\Pi(x)
=:\Pi^{[1]}(x)+x\Pi^{[x]}(x).\label{splitting of Theta(G-even) into even and odd part}
\end{align}
Likewise, there are even polynomials $p^{[1]}$ and $p^{[x]}$ such that 
\begin{align}\label{splitting of p into even and odd part}
p(x)=p^{[1]}(x)+xp^{[x]}(x). 
\end{align}

\begin{lemma}\label{Lemma expressions for the sum of peven and podd}
For any even polynomial $q$ of $x$, we have
\begin{align*}
q(x_1)+&q(x_2)+O((x_1-x_2)^4)\\
=2q&(\sqrt{x_1x_2})+(x_1-x_2)^2\frac{1}{4}\left(\frac{q'(\sqrt{x_1x_2})}{\sqrt{x_1x_2}}+q''(\sqrt{x_1x_2})\right)\:,
\end{align*}
and
\begin{align*}
x_1q(x_1)+&x_2q(x_2)+O((x_1-x_2)^4)\\
=(x_1+x_2)
&\left\{q(\sqrt{x_1x_2})
+(x_1-x_2)^2\frac{1}{8}
\left(\frac{3q'(\sqrt{x_1x_2})}{\sqrt{x_1x_2}}+q''(\sqrt{x_1x_2})\right)
\right\}\:.
\end{align*}
\end{lemma}

Note that the polynomials $q$ and $q''$ in $\sqrt{x_1x_2}$ are are actually polynomials in $x_1x_2$.

\begin{proof} Direct computation. The calculation can be shortened by using
\begin{align*}
x_1&=(1+\eps)\:x,\\
x_2&=(1-\eps)\:x,
\end{align*}
where $|\eps|\ll 1$.
\end{proof}

Abusing notations, for $j=1,2$, we shall write $p_j=p(x_j)$ and $\Theta_j=\Theta(x_j,y_j)$
etc. 
For $k\geq 0$, we denote by $[R(x_1,x_2)]_{>k}$ the polynomial in $x=x_1$ defined by eq.\ (\ref{projection of a rational function onto a part of bounded degree}),
with $x_2$ held fixed, and let $[R(x_1,x_2)]^{>k}$ be the polynomial for the opposite choice $x=x_2$ ($x_1$ fixed).  

\begin{theorem}\label{Theorem: Virasoro two-point function}
(\textbf{The Virasoro two-point function})

For $g\geq 1$, let $X_g$ be the hyperelliptic Riemann surface
\begin{align*}
X:\quad y^2=p(x),
\end{align*}
where $p$ is a polynomial, $\deg p=n$ odd.
Let 
\begin{displaymath}
\langle T(x_1)T(x_2)\rangle_c
:=\1^{-1}\langle T(x_1)T(x_2)\rangle-\1^{-2}\langle T(x_1)\rangle\langle T(x_2)\rangle
\end{displaymath}
be the connected two-point function of the Virasoro field. 
We have
\begin{enumerate}
\item 
\begin{align}\label{behaviour of Virasoro two-point function as x goes to infty}
\langle T(x_1)T(x_2)\rangle_c\:p_1p_2=O(x_1^{n-3}).
\end{align}
\item
For $|x_1|,|x_2|$ small, 
\begin{align*}
\langle T(x_1)T(x_2)\rangle_c\:p_1p_2
=\1^{-1}R(x_1,x_2)+O\left(1\right)|_{x_1=x_2}\:,
\end{align*}
where $R(x_1,x_2)$ is a rational function of $x_1,x_2$ and $y_1,y_2$,
and $O(1)|_{x_1=x_2}$ denotes terms that are regular on the diagonal $x_1=x_2$. 
\item\label{rational function of the two-point function}
The rational function is given by
\begin{align*}
R(x_1,x_2)
&=\frac{c}{4}\1\frac{p_1p_2}{(x_1-x_2)^4}\nn\\
&+\frac{c}{4}y_1y_2\1\left(\frac{p^{[1]}(\sqrt{x_1x_2})}{(x_1-x_2)^4} 
+\frac{1}{2}(x_1+x_2)\frac{p^{[x]}(\sqrt{x_1x_2})}{(x_1-x_2)^4}\right)\nn\\
&+\frac{c}{32}\1\frac{p_1'p_2'}{(x_1-x_2)^2}\\
&+\frac{c}{32}y_1y_2\1\left(\frac{\frac{1}{\sqrt{x_1x_2}}\:p^{[1]'}(\sqrt{x_1x_2})}{(x_1-x_2)^2}
+\frac{3}{2}(x_1+x_2)\frac{\frac{1}{\sqrt{x_1x_2}}\:p^{[x]'}(\sqrt{x_1x_2})}{(x_1-x_2)^2}\right)\nn\\
&+\frac{1}{8}\frac{p_1\Theta_2+p_2\Theta_1}{(x_1-x_2)^2}\\
&+\frac{1}{8}\left(y_1\Theta_2^{[y]}+y_2\Theta_1^{[y]}\right)
\left(\frac{p^{[1]}(\sqrt{x_1x_2})}{(x_1-x_2)^2}
+\frac{1}{2}(x_1+x_2)\frac{p^{[x]}(\sqrt{x_1x_2})}{(x_1-x_2)^2}\right)\nn\\
&+\frac{c}{32}y_1y_2\1\left(\frac{p^{[1]''}(\sqrt{x_1x_2})}{(x_1-x_2)^2}
+\frac{1}{2}(x_1+x_2)\frac{p^{[x]''}(\sqrt{x_1x_2})}{(x_1-x_2)^2}\right)\nn\\
&-\frac{c}{32}y_1y_2\left(\frac{\Pi^{[1]}(\sqrt{x_1x_2})}{(x_1-x_2)^2} 
+\frac{1}{2}(x_1+x_2)\frac{\Pi^{[x]}(\sqrt{x_1x_2})}{(x_1-x_2)^2}\right).\nn
\end{align*}
Here $p^{[1]}$ and $p^{[x]}$ and $\Pi^{[1]}$ and $\Pi^{[x]}$ are the even polynomials 
introduced in (\ref{splitting of p into even and odd part}) and in (\ref{splitting of Theta(G-even) into even and odd part}), respectively.
\item\label{O(1) part of connected Virasoro two-point function} 
For $R(x_1,x_2)$ thus defined, the connected Virasoro two-point function reads
\begin{align*}
\1\langle T(x_1)T(x_2)\rangle_c&\:p_1p_2\\
&=R(x_1,x_2)+P(x_1,x_2,y_1,y_2)\\
 &-\frac{1}{8}a_0\left(x_1^{n-2}\Theta_2+x_2^{n-2}\Theta_1\right)
 -\frac{c}{64}\1(n^2-1)a_0^2x_1^{n-2}x_2^{n-2}\\
 &-\frac{1}{8}y_1a_1x_1^{\frac{n}{2}-\frac{5}{2}}x_2^{\frac{n}{2}-\frac{1}{2}}\Theta_2^{[y]}
 -\frac{1}{8}y_2a_1x_1^{\frac{n}{2}-\frac{1}{2}}x_2^{\frac{n}{2}-\frac{5}{2}}\Theta_1^{[y]}\\
&-\frac{1}{16}y_1a_0x_1^{\frac{n}{2}-\frac{3}{2}}x_2^{\frac{n}{2}-\frac{1}{2}}\Theta_2^{[y]}
-\frac{1}{16}y_2a_0x_1^{\frac{n}{2}-\frac{1}{2}}x_2^{\frac{n}{2}-\frac{3}{2}}\Theta_1^{[y]}\\ 
 &-\frac{3}{16}y_1a_0x_1^{\frac{n}{2}-\frac{5}{2}}x_2^{\frac{n}{2}+\frac{1}{2}}\Theta_2^{[y]}
 -\frac{3}{16}y_2a_0x_1^{\frac{n}{2}+\frac{1}{2}}x_2^{\frac{n}{2}-\frac{5}{2}}\Theta_1^{[y]}\\ 
 &-\frac{1}{16}y_1a_2x_1^{\frac{n}{2}-\frac{5}{2}}x_2^{\frac{n}{2}-\frac{3}{2}}\Theta_2^{[y]}
 -\frac{1}{16}y_2a_2x_1^{\frac{n}{2}-\frac{3}{2}}x_2^{\frac{n}{2}-\frac{5}{2}}\Theta_1^{[y]}\:,
\end{align*}
where 
\begin{align}\label{definition of the polynomial which is specific to the state}
P(x_1,&x_2,y_1,y_2)\\
\=P^{[1]}(x_1,x_2)+y_1P^{[y_1]}(x_1,x_2)+y_2P^{[y_2]}(x_1,x_2)+y_1y_2P^{[y_1y_2]}(x_1,x_2)\:.\nn 
\end{align}
Here $P^{[1]}$, $P^{[y_1y_2]}$ and for $i=1,2$, $P^{[y_i]}$ are polynomials in $x_1$ and $x_2$ with
\begin{align*}
\deg_iP^{[1]}
\=n-3
=\deg_iP^{[y_j]}\:,\quad\text{for}\:j\not=i\:,\\
\deg_iP^{[y_i]}
\=\frac{n-1}{2}-3
=\deg_iP^{[y_1y_2]}
\:.
\end{align*}
($\deg_i$ denotes the degree in $x_i$).
Moreover, $P^{[1]}$, $P^{[y_1y_2]}$ and $y_1P^{[y_1]}+y_2P^{[y_2]}$ are symmetric
under flipping $1\leftrightarrow 2$.
These four polynomials are specific to the state. 
\end{enumerate}
\end{theorem}

\begin{proof}
Direct computation (cf.\ Appendix).
\end{proof}

In the following, let $\left[T(x_1)T(x_2)\right]_{\reg}+\langle T(x_1)\rangle_c\langle T(x_2)\rangle_c.1$ 
with $\langle T(x)\rangle_c=\1^{-1}\langle T(x)\rangle$
be the regular part of the OPE (\ref{OPE}) on the hyperelliptic Riemann surface $X$,
\begin{align}
T(x_1)T(x_2)\:p_1p_2
\mapsto
&\:\frac{c}{32}f(x_1,x_2)^2.1
+\frac{1}{4}f(x_1,x_2)\left(\vartheta_1+\vartheta_2\right)\nn\\
&\:+\left[T(x_1)T(x_2)\right]_{\reg}\:p_1p_2+\langle T(x_1)\rangle_c\langle T(x_2)\rangle_cp_1p_2.1
\label{definition of [T1T2]reg}\:.
\end{align}
Here $f(x_1,x_2):=\left(\frac{y_1-y_2}{x_1-x_2}\right)^2$,
and
\begin{align}\label{definition of vartheta}
\vartheta(x)
:=T(x)\:p-\frac{c}{32}\frac{[p']^2}{p}.1
\end{align}
satisfies $\langle\vartheta(x)\rangle=\frac{1}{4}\:\Theta(x,y)$.
$\vartheta(x)$ is holomorphic about $p=0$ since by eqs (\ref{S(p)}) and (\ref{coordinate transformation rule for T}), 
\begin{displaymath}
\hat{T}(y)
=\frac{4}{[p']^2}\left(\vartheta-\frac{c}{12}S(p)\:p\right)\:,
\end{displaymath}
where $S(p)$ is regular at $p=0$. 

\subsection{Application to the $(2,5)$ minimal model, in the case $n=5$}

In Section \ref{section: The Virasoro OPE} we introduced the so-called normal ordered product
\begin{align*}
N_0(\varphi_1,\varphi_2)(x_2)
=\lim_{x_1\rechts x_2}\left[\varphi_1(x_1)\varphi_2(x_2)\right]_{\text{regular}}
\end{align*}
of two fields $\varphi_1$, $\varphi_2$, 
where $\left[\varphi_1(x_1),\varphi_2(x_2)\right]_{\text{regular}}$ is the regular part of the OPE of $\varphi_1$, $\varphi_2$. 
In particular, $\langle N_0(T,T)(x)\rangle$ can be determined from 
Theorem \ref{Theorem: Virasoro two-point function}.\ref{O(1) part of connected Virasoro two-point function}.
To illustrate our formalism, we provide a short proof of the following well-known result (\cite{Blum:2009}, Sect.\ 3):

\begin{lemma}
The condition
$N_0(T,T)\propto\partial^2 T$ implies $c=-\frac{22}{5}$ and
\begin{align}
N_0(T,T)(x)
=\frac{3}{10}\:\partial^2 T(x).\label{(2,5) minimal model}
\end{align}
\end{lemma}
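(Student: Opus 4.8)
The plan is to convert the claimed field identity into an identity of explicit rational one-point functions and then read off the two constants. Write $N_0(T,T)=\kappa\,\partial^2 T$. By the Virasoro OPE (\ref{OPE}) one has $N_0(T,T)=\Phi+\tfrac12\partial^2 T$, so this is equivalent to $\Phi=(\kappa-\tfrac12)\partial^2 T$; taking the expectation in an arbitrary state and using the expansion axiom (\ref{Laurent expansion}) — which identifies $\langle N_0(T,T)(x)\rangle=\langle\Phi(x)\rangle+\tfrac12\partial^2\langle T(x)\rangle$ with the coefficient of $(x_1-x_2)^0$ in the Laurent expansion of $\langle T(x_1)T(x_2)\rangle$ about the diagonal — the hypothesis forces $\langle N_0(T,T)(x)\rangle=\kappa\,\partial^2\langle T(x)\rangle$. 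Since $n=5$ means genus $g=2$, both sides are rational in $x$ (and $y$), so it suffices to compare them.

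To evaluate the left-hand side I would start from Theorem \ref{Virasoro two-point function}.\ref{O(1) part of connected Virasoro two-point function}, specialised to $n=5$: there the stated degree bounds make $P^{(1)}$, $P^{(2)}$ and $P^{(1,2)}$ vanish and leave only the symmetric state polynomial $P^{(0)}$ of bidegree $(2,2)$, and since $g=2$ gives $\langle T(x)\rangle^{(\text{G-odd})}=0$, the formula simplifies considerably. From the displayed expression for $\langle T(x_1)T(x_2)\rangle p_1p_2-\1^{-1}\langle T(x_1)\rangle\langle T(x_2)\rangle p_1p_2$ one divides by $p_1p_2$, adds back the disconnected term $\1^{-1}\langle T(x_1)\rangle\langle T(x_2)\rangle$, subtracts the principal part $\tfrac{c/2}{(x_1-x_2)^4}\1+\tfrac{2\langle T(x_2)\rangle}{(x_1-x_2)^2}+\tfrac{\partial\langle T(x_2)\rangle}{x_1-x_2}$ dictated by (\ref{OPE}), and lets $x_1\to x_2=x$. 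This requires the diagonal expansions of $\tfrac{y_1+y_2}{x_1-x_2}$, $y_1y_2$ and of $p^{(\text{even})}(\sqrt{x_1x_2})$, $p^{(\text{odd})}(\sqrt{x_1x_2})$ (and the $\Pi$-analogues) to the order needed, for which the Lemma expanding $p^{(\text{even})}$, $p^{(\text{odd})}$ about $\sqrt{x_1x_2}$ is tailor-made. The outcome is $\langle N_0(T,T)(x)\rangle$ as a rational function in $x,y$ whose coefficients are polynomial in $c$ and in the free data of the state.

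For the right-hand side, Theorem \ref{theorem one-point function} gives, for $g=2$, $p\langle T(x)\rangle=\tfrac{c}{32}\tfrac{[p']^2}{p}\1+\tfrac14 P^{(\text{G-even})}(x)$ with $\deg P^{(\text{G-even})}=3$, leading coefficient fixed by part \ref{odd n} to $-\tfrac{c}{8}(n^2-1)a_0\1$, the remaining three coefficients being the free data; differentiating twice and clearing denominators turns $\langle N_0(T,T)(x)\rangle=\kappa\,\partial^2\langle T(x)\rangle$ into a polynomial identity in $x,y$. Since the field relation is state-independent, the coefficients that are linear in the free data (including $P^{(0)}$) must match on both sides, which fixes $P^{(0)}$ in terms of $c$ and the one-point data; the residual equations then over-determine $(c,\kappa)$. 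In particular the $x\to\infty$ behaviour — governed by Theorem \ref{Virasoro two-point function}.1, $\langle T(x_1)T(x_2)\rangle_c\,p_1p_2=O(x_1^{n-3})$, together with $\langle T(x)\rangle=\tfrac{c}{32}x^{-2}\1+O(x^{-3})$ — yields one relation, and matching the residues at the ramification points $p=0$ yields the rest; the unique solution is $c=-\tfrac{22}{5}$ and then $\kappa=\tfrac{3}{10}$, which is (\ref{(2,5) minimal model}).

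The main obstacle is the second step: correctly isolating the constant term of the coincident-point Laurent expansion from the long formula of Theorem \ref{Virasoro two-point function} — not dropping the disconnected $\1^{-1}\langle T\rangle^2$ contribution, keeping the $\tfrac12\partial^2 T$ shift relating $N_0(T,T)$ to $\Phi$ — and then verifying that the resulting over-determined linear system for the remaining coefficients is consistent precisely at $c=-\tfrac{22}{5}$. As an independent check on the two numbers one may argue purely on the vertex-algebra side: under the state-field correspondence $N_0(T,T)$ and $\partial^2 T$ correspond to $L_{-2}^2\Omega$ and $2L_{-4}\Omega$, so $N_0(T,T)=\kappa\,\partial^2 T$ forces $(L_{-2}^2-2\kappa L_{-4})\Omega=0$; applying $L_1$ gives $(3-10\kappa)L_{-3}\Omega=0$, hence $\kappa=\tfrac{3}{10}$, and then $L_2$ gives $(8+c-12\kappa)L_{-2}\Omega=(c+\tfrac{22}{5})L_{-2}\Omega=0$, hence $c=-\tfrac{22}{5}$.
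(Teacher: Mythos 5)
Your conclusion is right, but the route is genuinely different from the paper's, and the two halves of your proposal are of very unequal value. The paper observes that the statement is local and therefore reduces without loss of generality to $g=1$, where $\langle T(x)\rangle=\frac{c}{32}\frac{[p']^2}{p^2}\1-c\1\frac{x}{p}+\frac{\T}{p}$ is completely explicit (three terms, one free constant $\T=A_1/4$), and then checks $\langle N_0(T,T)(x)\rangle=\alpha\,\partial^2\langle T(x)\rangle$ by a short direct computation. Your main proposed route instead stays at $g=2$, $n=5$ and extracts the diagonal constant term from the full two-point-function formula of Theorem \ref{Virasoro two-point function}; this is workable in principle but much heavier, and as written it is a plan rather than a proof --- the decisive assertions (``the residual equations over-determine $(c,\kappa)$'', ``the unique solution is $c=-\frac{22}{5}$'') are never actually derived, and the residue-matching at ramification points is only gestured at. Had you stopped there, I would call this a gap.

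However, your closing ``independent check'' is in fact a complete and correct proof of both constants, and it is cleaner than either the paper's computation or your main route: under the state--field correspondence $N_0(T,T)\leftrightarrow L_{-2}^2\Omega$ and $\partial^2T\leftrightarrow 2L_{-4}\Omega$, so the hypothesis reads $(L_{-2}^2-2\kappa L_{-4})\Omega=0$; applying $L_1$ gives $(3-10\kappa)L_{-3}\Omega=0$ hence $\kappa=\frac{3}{10}$ (since $L_{-3}\Omega\leftrightarrow\partial T\neq0$), and applying $L_2$ gives $(8+c-12\kappa)L_{-2}\Omega=0$ hence $c=-\frac{22}{5}$. I have verified these commutator computations. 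This argument is exactly as ``local'' as the paper's reduction to $g=1$ and buys you state-independence for free, whereas the paper's version requires knowing the explicit torus one-point function. My recommendation: promote the Lie-algebraic check to the main argument and drop, or drastically compress, the $g=2$ extraction, which in its current form proves nothing on its own.
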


\begin{proof}
The statement is local, so we may assume w.l.o.g.\ $g=1$. In this case, 
\begin{align*}
\Theta^{[1]}(x)
&=-4cx\1+A_1,
\quad\quad
\Theta^{[y]}=0,
\end{align*}
by Theorem \ref{theorem one-point function}.(\ref{odd n}).
Using Corollary \ref{Corollary on two-form} and the transformation rule (\ref{coordinate transformation rule for T}),
we find
\begin{align*}
\langle T(x)\rangle
&=\frac{c}{32}\frac{[p']^2}{p^2}\1-c\1\frac{x}{p}+\frac{\T}{p},
\end{align*} 
where by (\ref{p<T(x)>}), $\T=\frac{A_1}{4}$.
Direct computation shows that 
\begin{displaymath}
\langle N_0(T,T)(x)\rangle=\alpha\:\partial^2\langle T(x)\rangle 
\end{displaymath}
iff $\alpha=\frac{3}{10}$ and $c=-\frac{22}{5}$. Since by assumption the two underlying fields are proportional, the claim follows.
\end{proof}

The aim of this section is to determine at least some of the constants in the Virasoro two-point function 
in the $(2,5)$ minimal model for $g=2$.
We will restrict our considerations to the case when $n$ is \textit{odd}.
(Better knowledge about $\Theta^{[1]}$ when $n$ is even doesn't actually provide more information,
it just leads to longer equations.) In the first case to consider, namely $n=5$, all Galois-odd terms are absent. 
Restricting to the Galois-even terms, condition (\ref{(2,5) minimal model}) reads as follows:

\begin{lemma}
In the $(2,5)$ minimal model for $g\geq 1$, we have
 \begin{align*}
&\quad\frac{7c}{640}\1\frac{[p'']^2}{p^2}
-\frac{7c}{960}\1\frac{p'p'''}{p^2}
+\frac{c}{1536}\frac{p^{(4)}}{p}\\
&+\frac{1}{20}\frac{p''}{p^2}\Theta^{[1]}
+\frac{3}{80}\frac{p'}{p^2}\Theta^{[1]'}
-\frac{3}{160}\frac{\Theta^{[1]''}}{p}\\
&-\frac{1}{16}\1^{-1}\left(\frac{\Theta^{[1]^2}}{p^2}+\frac{\Theta^{[y]^2}}{p}\right)\\
&+\frac{1}{4}a_0\frac{x^{n-2}}{p^2}\Theta^{[1]}
-\frac{1}{8}A_0a_0\frac{x^{2n-4}}{p^2}\\
&-\frac{c}{8\cdot32}\:\frac{1}{xp}\left(\Pi^{[1]'}+x\Pi^{[x]'}\right)\\
&-\frac{c}{256}\frac{1}{xp}\1\left(-p^{(3)}
-\frac{1}{2}\left(\frac{p^{[1]''}}{x}-p^{[x]''}\right)
+\frac{1}{2x}\left(\frac{p^{[1]'}}{x}+5p^{[x]'}\right)\right)\\
=&\quad\frac{P^{[1]}(x,x)}{p^2}+\frac{P^{[y_1y_2]}(x,x)}{p}.
\end{align*}
\end{lemma}
Note that the equation makes good sense since the l.h.s.\ is regular at $x=0$. 
For instance, $\Pi^{[1]'}$ is an odd polynomial of $x$, so its quotient by $x$ is regular. 

\begin{proof}
Direct computation.
\end{proof}

\begin{example}\label{example: (2,5) minimal model for n=5}
When $n=5$,
\begin{displaymath}
\deg P^{[1]}(x,x)=4,\quad P^{[y_1y_2]}(x,x)=0\:.
\end{displaymath}
Thus we have $5$ complex degrees of freedom.
One of them is the number $\1$, 
and according to Theorem \ref{theorem one-point function}.\ref{dim of space of Virasoro one-point function},
at most $3$ of them are given by $\langle T(x)\rangle$.
Set
\begin{align*}
P^{[1]}(x_1,x_2)
&=
B_{2,2}\:x_1^2x_2^2\\
&+B_{2,1}(x_1^2x_2+x_1x_2^2)\\
&+B_2(x_1^2+x_2^2)
+B_{1,1}\:x_1x_2\\
&+B_1(x_1+x_2)\\
&+B_0\:,
\end{align*}
$B_0,B_1,B_{i,j}\in\1\C$, $i,j=1,2$.
The additional constraint (\ref{(2,5) minimal model}) provides knowledge of
\begin{align*}
P^{[1]}(x,x)
=&
B_{2,2}\:x^4
+2B_{2,1}\:x^3
+(2B_2+B_{1,1})\:x^2
+2B_1\:x
+B_0
\end{align*}
only, so we are left with one unknown.
We will see later that all constants can be fixed using (\ref{(2,5) minimal model})
when the three-point function is taken into account.
\end{example}

\section{The Virasoro $N$-point function}

\subsection{Graph representation of the Virasoro $N$-point function}

For $g\geq 1$, let $X_g$ be the genus $g$ hyperelliptic Riemann surface
\begin{align*}
X:\quad y^2=p(x),
\end{align*}
where $p$ is a polynomial, $\deg p=n$, with $n=2g+1$, or $n=2g+2$.
Let $\mathcal{F}$ be the bundle of holomorphic fields introduced in Section \ref{section: The Virasoro OPE}.

\begin{theorem}
Let $S(x_1,\ldots,x_N)$, $N\in\N$, be the set of oriented graphs with vertices $x_1,\ldots,x_N$ (not necessarily connected), 
subject to the condition that every vertex has at most one ingoing and at most one outgoing line.

Given a state $\langle\quad\rangle$ on $X$, 
there is a multilinear map
\begin{align*}
\langle\quad\rangle_r:
\quad
S_*(\mathcal{F})\rechts\C\:,
\end{align*}
normalised such that $\1_r=\1$, with the following properties:
\begin{enumerate}
\item
For all $k\in\N$, $k\geq 2$, and any $\varphi_2,\ldots,\varphi_k\in\{1,T\}$, 
we have
\begin{displaymath}
\langle 1\:\varphi_2(z_2)\ldots\varphi_k(z_k)\rangle_r=\langle\varphi_2(z_2)\ldots\varphi_k(z_k)\rangle_r\:. 
\end{displaymath}
\item\label{the r-k-point function of the vartheta field}
For all $k\in\N$, 
$\langle\vartheta_1\ldots\vartheta_k\rangle_r$
is a polynomial in $x_1,\ldots,x_k$ and $y_1,\ldots,y_k$. 
\item
We have
\begin{align}\label{graph rep}
\langle T(x_1)\ldots T(x_N)\rangle\:p_1\ldots p_N
=\sum_{\Gamma\in S(x_1,\ldots,x_N)}F(\Gamma)\:,
\end{align}
where for $\Gamma\in S(x_1,\ldots,x_N)$,
\begin{align*}
F(\Gamma)
:=\left(\frac{c}{2}\right)^{\sharp\text{loops}}
\prod_{(x_i,x_j)\in\Gamma}\left(\frac{1}{4}\:f(x_i,x_j)\right)
\left\langle\bigotimes_{k\in A_N\cap {E_N}^c}\vartheta(x_k)\bigotimes_{\ell\in(A_N\cup E_N)^c}T(x_{\ell})p_{\ell}\right\rangle_r\:. 
\end{align*}
Here for any oriented edge $(x_i,x_j)\in\Gamma$, 
\begin{displaymath}
f(x_i,x_j)
:=\left(\frac{y_i+y_j}{x_i-x_j}\right)^2\:, 
\end{displaymath}
and $\vartheta$ is the field defined by eq.\ (\ref{definition of vartheta}).
$A_N, E_N\subset\{1,\ldots,N\}$ are the subsets
\begin{align*}
A_N:=&\{i\:|\exists\:j\:\text{such that}\:(x_i,x_j)\in\Gamma\}\:,\\
E_N:=&\{j\:|\exists\:i\:\text{such that}\:(x_i,x_j)\in\Gamma\}\:. 
\end{align*}
$\cup$ and $\cap$ are the set theoretic union and intersection, respectively, and $(\ldots)^c$ denotes the complement in $\{1,\ldots,N\}$.
\end{enumerate}
\end{theorem}

Note that $\langle\quad\rangle_r$ is not a state (it is not compatible with the OPE).

\begin{proof}
We use induction on $N$. 
By multilinearity of $\langle\quad\rangle_r$ and eq.\ (\ref{definition of vartheta}), 
$F(\Gamma)$ for $\Gamma\in S(x_1,\ldots,x_N)$ is determined by 
$\langle T(x_1)\ldots T(x_k)\rangle_r$, for $k\leq N$.

Suppose $\langle T(x_1)\ldots T(x_k)\rangle_r$, for $k\leq N$ has the required properties for $k<N$.
We define 
\begin{displaymath}
\langle T(x_1)\ldots T(x_N)\rangle_r
\end{displaymath}
by (\ref{graph rep}) and show first that 
$\langle T(x_1)\ldots T(x_N)\rangle_r$ is regular as two positions coincide.
In other words,
let $\Gamma_0(x_1,\ldots,x_N)\in S(x_1,\ldots,x_N)$ be the graph whose vertices are all isolated.
Then $\sum_{\Gamma\not=\Gamma_0}F(\Gamma)$ reproduces the correct singular part of the Virasoro $N$-point function
as prescribed by the OPE (\ref{definition of [T1T2]reg}) on $X$.

For $N=1$, $\Gamma_0(x)$ is the only graph, and 
\begin{align}\label{Virasoro one-point function r}
\langle T(x)\rangle\:p
=F(\Gamma_0(x))
=\langle T(x)\rangle_r p\:.
\end{align}

For  $N=2$, the admissible graphs form a closed loop,
a single line segment (with two possible orientations), and two isolated points.
Thus by eq.\ (\ref{graph rep}),
\begin{align*}
\langle T(x_1)T(x_2)\rangle_rp_1p_2
\=\langle T(x_1)T(x_2)\rangle\:p_1p_2-\frac{c}{32}f_{12}^2\1_r-\frac{1}{4}f_{12}\langle\vartheta_1+\vartheta_2\rangle_r
\:, 
\end{align*}
where $f_{12}:=f(x_1,x_2)$. 
According to the OPE (\ref{definition of [T1T2]reg}),
$\langle T(x_1)T(x_2)\rangle_rp_1p_2$ is regular on the diagonal $x_1=x_2$.

In order to prove regularity of $\langle T(x_1)\ldots T(x_N)\rangle_rp_1\ldots p_N$ on all partial diagonals for $N>2$,
it suffices to show that the coefficients of the singularities are correct.
Suppose the graph representation for the $k$-point function of the Virasoro field is correct for $2\leq k\leq N-1$.
For $1\leq i\leq N$, set 
$S^{[i]}:=S(x_i,\ldots,x_N)$
and
$\Gamma_0^{[i]}:=\Gamma_0(x_i,\ldots,x_N)$.
For $1\leq i,j\leq N$, $i\not=j$, define
\begin{align*}
S_{(ij)}
:=&\{\Gamma\in S^{[1]}|\:(i,j),(j,i)\in\Gamma\}\:,\\
S_{(i,j)}
:=&\{\Gamma\in S^{[1]}|\:(i,j)\in\Gamma,(j,i)\notin\Gamma\}\:,\\
S_{(i)(j)}
:=&\{\Gamma\in S^{[1]}|\:(i,j),(j,i)\notin\Gamma\}\:.
\end{align*}
$S^{[1]}$ decomposes as
\begin{displaymath}
S^{[1]}
=S_{(12)}\cup S_{(1,2)}\cup S_{(2,1)}\cup S_{(1),(2)}. 
\end{displaymath}
Since $S_{(12)}\cong S^{[3]}$, the equality
\begin{align*}
\sum_{\Gamma\in S_{(12)}}F(\Gamma) 
=\frac{c}{32}\:f_{12}^2\:\langle T(x_3)\ldots T(x_N)\rangle\:p_3\ldots p_N
\end{align*}
(with $f_{12}:=f(x_1,x_2)$) holds by the induction hypothesis. 
By the symmetrization argument following eq.\ (\ref{expansion of second order term}),
it remains to show that
\begin{displaymath}
\sum_{\Gamma\in S^{[1]}\setminus S_{(12)}}F(\Gamma)
=\frac{f_{12}}{2}\:\langle\vartheta_2T(x_3)\ldots T(x_N)\rangle\:p_3\ldots p_N+O((x_1-x_2)^{-1})\:,
\end{displaymath} 
which under the induction hypothesis on $S^{[2]}$ and $S^{[3]}$, we reformulate as
\begin{align*}
\sum_{\Gamma\in S^{[2]}}&F(\varphi^{-1}(\Gamma))+F(\bar{\varphi}^{-1}(\Gamma))\\ 
=&\:\frac{f_{12}}{2}\left(\sum_{\Gamma\in S^{[2]}}F(\Gamma)
-\frac{c}{32}\frac{[p_2']^2}{p_2}\sum_{\Gamma'\in S^{[3]}}F(\Gamma')\right)+O((x_1-x_2)^{-1})\:.
\end{align*}
Here $\varphi$, $\bar{\varphi}$ are the invertible maps
\begin{align*}
\varphi:\quad S_{(1,2)}\rechts S^{[2]},\\
\bar{\varphi}:\quad S_{(2,1)}\rechts S^{[2]},                               
\end{align*}
given by contracting the link $(x_1,x_2)$ resp.\ $(x_2,x_1)$ into the point $x_2$, and leaving the graph unchanged otherwise.
Let $S_{(2)}\subset S^{[2]}$ be the subset of graphs containing $x_2$ as an isolated point,
and let $\chi: S_{(2)}\rechts S^{[3]}$ be the isomorphism given by omitting the vertex $x_2$ from the graph.
Now for $\Gamma\in {S_{(2)}}$, the graph representation yields
\begin{align*}
F(&\varphi^{-1}(\Gamma))+F(\bar{\varphi}^{-1}(\Gamma))\\ 
&=\:\frac{f_{12}}{2}\:\left(F(\Gamma)-\frac{c}{32}\frac{[p_2']^2}{p_2}\:F(\chi(\Gamma))\right)
+O((x_1-x_2)^{-1})\:,
\end{align*}
while for $\Gamma\in S^{[2]}\setminus S_{(2)}$, 
\begin{displaymath}
F(\varphi^{-1}(\Gamma))+F(\bar{\varphi}^{-1}(\Gamma)) 
=\:\frac{f_{12}}{2}\:F(\Gamma)+O((x_1-x_2)^{-1})\:.
\end{displaymath}

It remains to show part \ref{the r-k-point function of the vartheta field}.
It is sufficient to show that $\langle\vartheta_1\ldots\vartheta_k\rangle_r$ for $1\leq k\leq N$ is regular at $p_1=0$.
We use induction. 
From eqs (\ref{definition of vartheta}) and (\ref{Virasoro one-point function r}) follows 
$\langle\vartheta\rangle_r
=\langle\vartheta\rangle
=\frac{1}{4}\Theta$,
which is a polynomial in $x$ and $y$.
Now suppose $\langle\vartheta_1\ldots\vartheta_k\rangle_r$ is regular at $p_1=0$, for $k\leq N-1$.
For $I\in\mathcal{P}(\{2,\ldots,N\})$, the powerset of $\{2,\ldots,N\}$, 
we have by eq.\ (\ref{definition of vartheta}), 
\begin{align*}
\langle\vartheta_1\ldots\vartheta_N\rangle_r 
\=\langle T(x_1)\ldots T(x_N)\rangle_rp_1\ldots p_N
-\frac{c}{32}\frac{[p_1']^2}{p_1}
\sum_{I\in\mathcal{P}(\{2,\ldots,N\})}
\prod_{i\in I}\left(\frac{c}{32}\frac{[p_i']^2}{p_i}\right)
\langle\bigotimes_{j\in I^c}\vartheta_j\rangle_r\\
&\hspace{4cm}+\text{terms regular at}\:p_1=0\\
\=\langle T(x_1)\ldots T(x_N)\rangle_rp_1\ldots p_N
-\frac{c}{32}\frac{[p_1']^2}{p_1}\langle T(x_2)\ldots T(x_N)\rangle_rp_2\ldots p_N\\
&\hspace{4cm}+\text{terms regular at}\:p_1=0
\:. 
\end{align*}
Here we have, using the graph representation,
\begin{align*}
\langle T(x_1)\ldots T(x_N)\rangle_rp_1\ldots p_N
\=\langle T(x_1)\ldots T(x_N)\rangle p_1\ldots p_N
-\sum_{\Gamma\in S^{[1]}\setminus\Gamma_0^{[1]}}F(\Gamma)\\
\=\frac{c}{32}\frac{[p_1']^2}{p_1}\left(\langle T(x_2)\ldots T(x_N)\rangle p_2\ldots p_N
-\sum_{\Gamma\in S^{[2]}\setminus\Gamma_0^{[2]}}F(\Gamma)\right)\\
\+\text{terms regular at}\:p_1=0\\
\=\frac{c}{32}\frac{[p_1']^2}{p_1}
\langle T(x_2)\ldots T(x_N)\rangle_r p_2\ldots p_N\\
\+\text{terms regular at}\:p_1=0
\:.
\end{align*}
We explain the second identity.
Consider the augmentation map $a: S^{[2]}\setminus\Gamma_0^{[2]}\rechts S^{[1]}\setminus\Gamma_0^{[1]}$
defined by adjoining the isolated vertex $x_1$ to the graph. We have
\begin{align*}
F(a(\Gamma))
=\frac{c}{32}\frac{[p_1']^2}{p_1}F(\Gamma)+\{\text{terms regular at}\:p_1=0\}\:. 
\end{align*}
Indeed, all terms in $F(a(\Gamma))$ that involve $\vartheta_1$ are $k$-point functions with $k<N$, 
since end points of edges are not labelled,
and so are regular at $p_1=0$ by assumption.
We conclude that $\langle\vartheta_1\ldots\vartheta_N\rangle_r$ is regular at $p_1=0$.


\end{proof}

Since the proof is by recursion, it should generalise easily to more general Riemann surfaces.  

We illustrate the theorem for the case $N=3$. 
Recall that the connected $1$-point, $2$-point and $3$-point functions are given by 
\begin{align*}
\langle\varphi(x)\rangle_c
\=\1^{-1}\langle\varphi(x)\rangle\:,\\
\langle\varphi_1(x_1)\varphi_2(x_2)\rangle_c
\=\1^{-1}\langle\varphi_1(x_1)\varphi_2(x_2)\rangle
-\1^{-2}\langle\varphi_1(x_1)\rangle\:\langle\varphi_2(x_2)\rangle\:,
\end{align*}
and 
\begin{align*}
\langle\varphi_1(x_1)\varphi_2(x_2)\varphi_3(x_3)\rangle_c
\=\1^{-1}\langle\varphi_1(x_1)\varphi_2(x_2)\varphi_3(x_3)\rangle\\
\-\1^{-2}\left\{\langle\varphi_1(x_1)\varphi_2(x_2)\rangle\:\langle\varphi_3(x_3)\rangle+\cyc\right\}\\
\-\1^{-3}\langle\varphi_1(x_1)\rangle\:\langle\varphi_2(x_2)\rangle\:\langle\varphi_3(x_3)\rangle
\:.
\end{align*}

\begin{example}\label{Example: Virasoro three-point function on X}
When $\deg p=n$ is odd, 
\begin{align*}
\langle T(x_1)T(x_2)T(x_3)\rangle_c\:p_1p_2p_3=O(x_1^{n-3})\:. 
\end{align*}
\item 
In the region where $x_1,x_2,x_3$ are finite,
the connected Virasoro three-point function is given by
\begin{align*}
&\langle T(x_1)T(x_1)T(x_3)\rangle_cp_1p_2p_3
=R^{(0)}(x_1,x_2,x_3)+O(1)|_{x_1,x_2,x_3}\:,
\end{align*}
where $R^{(0)}$ is the rational function of $x_1,x_2,x_3$ and $y_1,y_2,y_3$ given by
\begin{align*}
R^{(0)}(x_1,x_2,x_3)
=&\frac{c}{64}\left(\frac{y_1+y_2}{x_1-x_2}\right)^2\left(\frac{y_1+y_3}{x_1-x_3}\right)^2\left(\frac{y_2+y_3}{x_2-x_3}\right)^2\\
+&\frac{1}{64}
\left(\frac{y_1+y_2}{x_1-x_2}\right)^2
\left(\frac{y_1+y_3}{x_1-x_3}\right)^2\1^{-1}(\Theta_2+\Theta_3)\\
+&\frac{1}{64}
\left(\frac{y_1+y_2}{x_1-x_2}\right)^2
\left(\frac{y_2+y_3}{x_2-x_3}\right)^2\1^{-1}(\Theta_1+\Theta_3)\\
+&\frac{1}{64}
\left(\frac{y_1+y_3}{x_1-x_3}\right)^2
\left(\frac{y_2+y_3}{x_2-x_3}\right)^2\1^{-1}(\Theta_1+\Theta_2)\\
+&\frac{1}{4}\left(\frac{y_1+y_2}{x_1-x_2}\right)^2
\Big(
\1^{-1}\langle\left[T(x_1)T(x_3)\right]_{\reg}\rangle p_1p_3\\
&\quad\quad\quad\quad\quad\quad
+\1^{-1}\langle\left[T(x_2)T(x_3)\right]_{\reg}\rangle p_2p_3\Big)\\
+&\frac{1}{4}\left(\frac{y_1+y_3}{x_1-x_3}\right)^2
\Big(
\1^{-1}\langle\left[T(x_1)T(x_2)\right]_{\reg}\rangle p_1p_2\\
&\quad\quad\quad\quad\quad\quad
+\1^{-1}\langle\left[T(x_2)T(x_3)\right]_{\reg}\rangle\:p_2p_3\Big)\\
+&\frac{1}{4}\left(\frac{y_2+y_3}{x_2-x_3}\right)^2
\Big(
\1^{-1}\langle\left[T(x_1)T(x_2)\right]_{\reg}\rangle\:p_1p_2\\
&\quad\quad\quad\quad\quad\quad
+\1^{-1}\langle\left[T(x_1)T(x_3)\right]_{\reg}\rangle\:p_1p_3\Big).
\end{align*}
Here for $i,j\in\{1,2,3\}$, $\left[T(x_i)T(x_j)\right]_{\reg}$ is defined by (\ref{definition of [T1T2]reg}).
By part \ref{Proof of the Theorem on the Virasoro-2-point function: Comparison with the OPE} 
in the Proof of Theorem \ref{Theorem: Virasoro two-point function},
$\langle\left[T(x_i)T(x_j)\right]_{\reg}\rangle\:p_ip_j$ is a polynomial in $x_i,x_j$ and $y_i,y_j$.

Moreover, the $O(1)|_{x_1,x_2,x_3}$ term is a polynomial in $x_1,x_2,x_3$ and $y_1,y_2,y_3$.
Indeed, $\langle T(x_1)T(x_2)T(x_3)\rangle_c$ is regular at $p_1=0$ because  
\begin{align*}
\langle T(x_1)T(x_2)T(x_3)\rangle_c
\=\1^{-1}\langle T(x_1)T(x_2)T(x_3)\rangle
-\1^{-2}\langle T(x_1)\rangle\:\langle T(x_2)T(x_3)\rangle\\
\-\1^{-1}\left\{\langle T(x_1)T(x_2)\rangle_c\:\langle T(x_3)\rangle
+\langle T(x_3)T(x_1)\rangle_c\:\langle T(x_2)\rangle
\right\}
\:.
\end{align*}
\end{example}

\subsection{Application to the $(2,5)$ minimal model, in the case $n=5$}


\begin{theorem}
We consider the $(2,5)$ minimal model on a genus $g=2$ hyperelliptic Riemann surface
\begin{displaymath}
X:\quad y^2=p(x). 
\end{displaymath}
There are exactly $4$ parameters, given by $\1$ and $\langle T(x)\rangle$,
and all other constants in the two-and three-point function are known.
\end{theorem}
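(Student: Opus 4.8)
The plan is to specialise to $g=2$, i.e.\ $n=2g+1=5$, and to exploit that for $n=5$ every Galois-odd structure vanishes, so that the only freedom beyond $\1$ and $\langle T(x)\rangle$ sits in a finite list of complex coefficients which the $(2,5)$ relation $N_0(T,T)=\tfrac{3}{10}\partial^2T$ must eliminate. First I would do the parameter count. By Theorem~\ref{theorem one-point function}.\ref{dim of space of Virasoro one-point function} the space of $\langle T(x)\rangle$ has dimension $3(g-1)=3$; concretely $\langle T(x)\rangle$ is reconstructed through~(\ref{p<T(x)>}) from the degree-$(n-2)=3$ polynomial $P^{(\text{G-even})}$, whose leading coefficient is forced by Theorem~\ref{theorem one-point function}.\ref{odd n} to $A_0=-\tfrac{c}{8}(n^2-1)a_0\1$, leaving $A_1,A_2,A_3$ free, while $P^{(\text{G-odd})}$ is absent because its degree $\tfrac{n-1}{2}-3$ is negative. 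Together with the scalar $\1$ these are the four advertised parameters; they are genuinely independent since every state furnishes a valid one-point function and the $(2,5)$ requirement has already fixed $c=-\tfrac{22}{5}$, so the whole substance of the theorem is that nothing else is free.

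Next I would dispatch the two-point function. By Theorem~\ref{Virasoro two-point function}.\ref{O(1) part of connected Virasoro two-point function}, for $n=5$ the connected two-point function is the explicit rational function $R(x_1,x_2)$ --- built entirely from $p$ and from $P^{(\text{G-even})}$ (equivalently from $\Pi=-\tfrac{8}{c}P^{(\text{G-even})}$ split as in~(\ref{splitting of P(even) into even and odd part})), hence known in terms of $\1,A_1,A_2,A_3$ --- minus the listed projection corrections (again explicit in the same data), plus a single symmetric state-specific polynomial $P^{(0)}(x_1,x_2)$ of degree $n-3=2$ in each variable, i.e.\ the six coefficients $B_{2,2},B_{2,1},B_2,B_{1,1},B_1,B_0$ of Example~\ref{example: (2,5) minimal model for n=5}; all polynomials carrying a factor $y_i$ vanish. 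Imposing~(\ref{(2,5) minimal model}) on the regular part of the OPE produces the identity of the lemma preceding Example~\ref{example: (2,5) minimal model for n=5}; matching the coefficients of $x^0,\dots,x^4$ there expresses the diagonal quartic $P^{(0)}(x,x)$ through $\1,A_1,A_2,A_3$, the required vanishing of the order $-4$ and $-5$ parts at infinity being the consistency check noted there. This fixes five of the six $B$'s, leaving undetermined exactly the one off-diagonal combination of $B_2$ and $B_{1,1}$, in agreement with Example~\ref{example: (2,5) minimal model for n=5}.

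Finally I would turn to the three-point function. By Theorem~\ref{Virasoro three-point function on X}, for $n=5$ the connected three-point function equals the explicit $R^{(0)}(x_1,x_2,x_3)$ --- whose only non-elementary ingredients are the two-point regular parts $\langle[T(x_i)T(x_j)]_{\reg}\rangle p_ip_j$, known up to that one leftover constant by the previous step --- minus the listed projection corrections, plus a symmetric state-specific polynomial $P^{(0)}(x_1,x_2,x_3)$ of degree $n-3=2$ in each variable, all $y_i$-bearing polynomials again vanishing. Now impose~(\ref{(2,5) minimal model}) once more, this time on the regular part of $T(x_1)T(x_2)$ inside the three-point function: the $(x_1-x_2)^0$-coefficient of $\langle T(x_1)T(x_2)T(x_3)\rangle_c$ must equal $\tfrac{3}{10}\,\partial_{x_2}^2\langle T(x_2)T(x_3)\rangle_c$. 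Expanding both sides in powers of $x_1-x_2$ and then in $x_3$ and matching coefficients gives a linear system in the coefficients of $P^{(0)}(x_1,x_2,x_3)$ and in the leftover coefficient of $P^{(0)}(x_1,x_2)$, with all inhomogeneous terms polynomial in $\1,A_1,A_2,A_3$; solving it expresses every remaining constant through $\1$ and $\langle T(x)\rangle$.

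The hard part is the bookkeeping at this last step: one has to verify that the linear system has exactly the right rank --- it must leave no free coefficient and must not be inconsistent. The over-determination is harmless a priori, since associativity of the OPE means the three-point function can be reduced through any of the three pairs and all three reductions agree with $R^{(0)}$ by construction; so the real task is to check that no nonzero combination of the $P^{(0)}(x_1,x_2,x_3)$-coefficients together with the leftover $P^{(0)}(x_1,x_2)$-coefficient lies in the kernel of the equations. Since for $n=5$ all these polynomials have small degree, this is a finite, if lengthy, linear-algebra verification, which I expect to go through cleanly and to leave precisely the four parameters $\1$ and $\langle T(x)\rangle$.
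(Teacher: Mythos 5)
Your proposal follows essentially the same route as the paper: count the four parameters, use the null-vector relation (\ref{(2,5) minimal model}) to fix $P^{(0)}(x,x)$ in the two-point function (leaving one combination of $B_2$ and $B_{1,1}$ free), then impose the same relation in the limit $x_1\rechts x_2$ of the three-point function to determine $P^{(0)}(x_1,x_2,x_3)$, and finally recover the leftover two-point constant through the occurrence of $\langle\left[T(x_1)T(x_2)\right]_{\reg}\rangle\,p_1p_2$ in $R^{(0)}(x_1,x_2,x_3)$. The rank verification you defer as ``the hard part'' is precisely the short computation the paper performs: writing out the diagonal restriction $P^{(0)}(x_2,x_2,x_3)$ monomial by monomial shows that every coefficient $B_{i,j,k}$ of the symmetric polynomial can be read off directly, because the surviving third variable $x_3$ separates the monomials that collide on the diagonal in the two-variable case (e.g.\ $B_{2,0,0}$ is isolated by the $x_3^2$ term and $B_{1,1,0}$ by the $x_2x_3$ term), so no lengthy linear algebra is required.
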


\begin{proof}
W.l.o.g.\ $n=5$. 
In this case the two-point function in the $(2,5)$ minimal model has been determined previously, up to one constant, 
cf.\ Example \ref{example: (2,5) minimal model for n=5}. 
In the three-point function, there is only one polynomial $P^{[1]}(x_1,x_2,x_3)$, of degree $n-3$ in each of $x_1,x_2,x_3$, free to choose. 
Set
\begin{align*}
P^{[1]}(x_1,x_2,x_3)
=&B_{2,2,2}\:x_1^2x_2^2x_3^2\\
+&B_{2,2,1}(x_1^2x_2^2x_3+x_1^2x_2x_3^2+x_1x_2^2x_3^2)\\
+&B_{2,1,1}(x_1^2x_2x_3+x_1x_2^2x_3+x_1x_2x_3^2)\\
&\quad+B_{2,2,0}(x_1^2x_2^2+x_1^2x_3^2+x_2^2x_3^2)\\
+&B_{2,1,0}(x_1^2x_2+x_1^2x_3+x_1x_2^2+x_1x_3^2+x_2^2x_3+x_2x_3^2)\\
&\quad+B_{1,1,1}\:x_1x_2x_3\\
+&B_{2,0,0}(x_1^2+x_2^2+x_3^2)
+B_{1,1,0}(x_1x_2+x_1x_3+x_2x_3)\\
+&B_{1,0,0}(x_1+x_2+x_3)\\
+&B_{0,0,0},
\quad\quad\quad\quad\quad
B_{i,j,k}\in\1\C,\:i,j,k\in\{1,2\},\:k\leq j\leq i.
\end{align*}
The constraint eq.\ (\ref{(2,5) minimal model}) provides the knowledge of
\begin{align*}
P^{[1]}(x_2,x_2,x_3)
=&B_{2,2,2}\:x_2^4x_3^2\\
+&B_{2,2,1}(x_2^4x_3+2x_2^3x_3^2)\\
+&2B_{2,1,1}x_2^3x_3
+(B_{2,1,1}+2B_{2,2,0})x_2^2x_3^2
+B_{2,2,0}x_2^4\\
+&2B_{2,1,0}(x_2^3+x_2x_3^2)
+(B_{1,1,1}+2B_{2,1,0})\:x_2^2x_3\\
+&(B_{1,1,0}+2B_{2,0,0})x_2^2
+B_{2,0,0}x_3^2
+2B_{1,1,0}x_2x_3\\
+&B_{1,0,0}(2x_2+x_3)\\
+&B_{0,0,0},
\end{align*}
(obtained in the limit as $x_1\rechts x_2$), and thus of all $9$ coefficients.
So the three-point function is uniquely determined.
Since $\langle\left[T(x_1)T(x_2)\right]_{\reg}\rangle\:p_1p_2$ obtained from (\ref{definition of [T1T2]reg}) 
is just the $O(1)|_{x_1=x_2}$ part in the connected two-point function 
given by eq.\ (\ref{candidate for connected two-point function with correct finite behaviour}),
the remaining unknown constant in the two-point function is determined
using Example \ref{Example: Virasoro three-point function on X}.
\end{proof}

\textbf{Acknowledgement}

The author would like to thank W.\ Nahm for his help.

\appendix

\section{Proof of Theorem \ref{Theorem: Virasoro two-point function}}

\begin{enumerate}
\item 
We have
\begin{align*}
\langle T(x_1)T(x_2)\rangle\:&p_1p_2\\
&=\left[\langle T(x_1)T(x_2)\rangle\:p_1p_2\right]_{n-2}
+\left[\langle T(x_1)T(x_2)\rangle\:p_1p_2\right]_{\leq n-3},
\end{align*}
where according to (\ref{behaviour of T in infinity if n is odd}),
\begin{align*}
\left[\langle T(x_1)T(x_2)\rangle\:p_1p_2\right]_{n-2}
=&\:\frac{c}{32}a_0x^{n-2}\langle T(x_2)\rangle\:p_2\nn\\
=&\:\1^{-1}\left[\langle T(x_1)\rangle\langle T(x_2)\rangle\:p_1p_2\right]_{n-2},
\end{align*}
so
\begin{align*}
\langle T(x_1)T(x_2)\rangle\:&p_1p_2-\1^{-1}\langle T(x_1)\rangle\langle T(x_2)\rangle p_1p_2\\
=&\left[\langle T(x_1)T(x_2)\rangle\:p_1p_2\right]_{\leq n-3}
-\1^{-1}\left[\langle T(x_1)\rangle\langle T(x_2)\rangle p_1p_2\right]_{\leq n-3}.
\end{align*}
This shows (\ref{behaviour of Virasoro two-point function as x goes to infty}).
\item\label{Proof of the Theorem on the Virasoro-2-point function: Comparison with the OPE}
The proof is constructive. 
We build up a candidate and correct it subsequently so as to 
\begin{itemize}
\item 
match the singularities prescribed by the OPE, 
\item
behave at infinity according to (\ref{behaviour of Virasoro two-point function as x goes to infty}),
\item
be holomorphic in the appropriate coordinates away from the diagonal.
$X$ is covered by the coordinate patches $\{p\not=0\}$, $\{p'\not=0\}$ and $\{|x^{-1}|<\eps\}$.
\end{itemize}
General outline: The two-point function is meromorphic on $X$ whence rational. 
So once the singularities are fixed it is clear that we are left with the addition of polynomials as the only degree of freedom.
The key ingredient is the use of the rational function
\begin{align}\label{definition of the basic rational function f}
f(x_1,x_2):=\left(\frac{y_1+y_2}{x_1-x_2}\right)^2,
\end{align}
which has a double pole at $x_1=x_2$ as $y_1=y_2\not=0$, and is regular for $(x_1,y_1)$ close to $(x_2,-y_2)$.

For finite and fixed but generic $x_2$, and for the function $f$ defined by eq.\ (\ref{definition of the basic rational function f}),
we have
\begin{align*}
\frac{c}{32}\frac{1}{p_1p_2}f(x_1,x_2)^2
&=\frac{c/2}{(x_1-x_2)^4}+\frac{c}{16}\frac{[p_2']^2}{p_2^2(x_1-x_2)^2}+O(1),
\end{align*}
where $O(1)$ includes all terms regular at $x_1=x_2$.
Moreover,
\begin{align}\label{expansion of second order term}
\frac{1}{4}\frac{1}{p_1p_2}f(x_1,x_2)
=\frac{1}{p_2(x_1-x_2)^2} +O((x_1-x_2)^{-1})\:.
\end{align}
Thus
\begin{align*}
\frac{[p_2']^2}{p_2^2(x_1-x_2)^2}
\=\frac{1}{8p_1p_2}\left(\frac{[p_1']^2}{p_1}+\frac{[p_2']^2}{p_2}\right)f(x_1,x_2)
+O(1)\:.
\end{align*}
We conclude that
\begin{align}\label{leading term of OPE}
\frac{c/2}{(x_1-x_2)^4}\1
&=\frac{c}{32}\frac{1}{p_1p_2}\1\Big\{\:f(x_1,x_2)^2
\nn\\
&\quad\quad\quad\quad\quad\quad
-\frac{1}{4}f(x_1,x_2)\left(\frac{[p_1']^2}{p_1}+\frac{[p_2']^2}{p_2}\right)\Big\}+O(1).
\end{align}
Now by eq.\ (\ref{expansion of second order term}), 
\begin{align}\label{replacement of term propto (x1-x2) to power -2}
\frac{\langle T(x_1)\rangle+\langle T(x_2)\rangle}{(x_1-x_2)^2}
\=\frac{1}{4}f(x_1,x_2)
\left(\frac{\langle T(x_1)\rangle}{p_2}+\frac{\langle T(x_2)\rangle}{p_1}\right)+O(1)\:.
\end{align}
From eqs (\ref{leading term of OPE}) and (\ref{replacement of term propto (x1-x2) to power -2}) we obtain 
\begin{align*}
\frac{c/2}{(x_1-x_2)^4}&\1+\frac{\langle T(x_1)\rangle+\langle T(x_2)\rangle}{(x_1-x_2)^2}\\
\=\frac{1}{p_1p_2}\left(\frac{c}{32}\1f(x_1,x_2)^2+\frac{1}{16}f(x_1,x_2)(\Theta_1+\Theta_2)\right)+O(1)\:,
\end{align*}
by eq.\ (\ref{p<T(x)>})
Thus in the region where $x_1$ and $x_2$ are \textit{finite}, we have
\begin{align}
\1\langle T(x_1)T(x_2)\rangle_c\:p_1p_2
=R^{(0)}(x_1,x_2)+O\left(1\right)|_{x_1=x_2},
\label{candidate for connected two-point function with correct finite behaviour}
\end{align}
where 
\begin{align}\label{rational symmetric function}
R^{(0)}(x_1,x_2)
:=\:\frac{c}{32}f(x_1,x_2)^2\1
+\frac{1}{16}f(x_1,x_2)(\Theta_1+\Theta_2)\:.
\end{align}
Note that the $O(1)|_{x_1=x_2}$ terms are restricted to polynomials in $x_1,x_2$ and $y_1,y_2$.
This simplification is due to the use of the connected two-point function.

The degree requirement (\ref{behaviour of Virasoro two-point function as x goes to infty}) yields the upmost specification of 
eq.\ (\ref{candidate for connected two-point function with correct finite behaviour}),
because some terms appearing in
\begin{align*}
R^{(0)}(x_1,x_2)
&=\frac{c}{32}\1\frac{(p_1-p_2)^2}{(x_1-x_2)^4}\nn\\
&+\frac{c}{8}y_1y_2\1\frac{p_1+p_2}{(x_1-x_2)^4}
+\frac{c}{4}\1\frac{p_1p_2}{(x_1-x_2)^4}\nn\\
&+\frac{1}{16}\frac{p_1+2y_1y_2+p_2}{(x_1-x_2)^2}
\left(\Theta_1^{[1]}+\Theta_2^{[1]}\right)\nn\\
&+\frac{1}{16}\frac{p_1+2y_1y_2+p_2}{(x_1-x_2)^2}
\left(y_1\Theta_1^{[y]}+y_2\Theta_2^{[y]}\right)
\end{align*}
are absent in eq.\ (\ref{candidate for connected two-point function with correct finite behaviour})
and so determine some of the polynomials in the connected two-point function (which in the following we shall refer to as correction terms).
To keep formulas short, we shall go over to the rational function $R(x_1,x_2)$ 
introduced in part \ref{rational function of the two-point function} of Theorem \ref{Theorem: Virasoro two-point function},
since it has milder divergencies for $|x|$ large than $R^{(0)}(x_1,x_2)$ does. Thus we show now that
\begin{align}\label{reduction to the rational function of the claim}
R^{(0)}(x_1,x_2)
=R(x_1,x_2)+\text{polynomials},
\end{align}
where the ``polynomial`` part is a sum of polynomials $x_1,x_2$ and in $y_1,y_2$.
Indeed, we have the following identities:
\begin{align*}
\frac{(p_1-p_2)^2}{(x_1-x_2)^4}=\frac{p_1'p_2'}{(x_1-x_2)^2}+\text{polynomial}.
\end{align*}
Lemma \ref{Lemma expressions for the sum of peven and podd} yields
\begin{align}
\frac{c}{8}y_1y_2\1&\frac{p_1+p_2}{(x_1-x_2)^4}\nn\\ 
=\frac{c}{4}y_1y_2\1&\frac{p^{[1]}(\sqrt{x_1x_2})}{(x_1-x_2)^4} 
+\frac{c}{8}y_1y_2(x_1+x_2)\1\frac{p^{[x]}(\sqrt{x_1x_2})}{(x_1-x_2)^4}\nn\\
+\frac{c}{32}y_1y_2\1&\frac{\frac{1}{\sqrt{x_1x_2}}p^{[1]'}(\sqrt{x_1x_2})}{(x_1-x_2)^2}\nn\\
&\quad\quad\quad\quad
+\frac{3c}{64}y_1y_2(x_1+x_2)\1
\frac{\frac{1}{\sqrt{x_1x_2}}p^{[x]'}(\sqrt{x_1x_2})}{(x_1-x_2)^2}\nn\\
+\frac{c}{32}y_1y_2\1&\frac{p^{[1]''}(\sqrt{x_1x_2})}{(x_1-x_2)^2}\nn\\
&\quad\quad\quad\quad
+\frac{c}{64}y_1y_2(x_1+x_2)\1
\frac{p^{[x]''}(\sqrt{x_1x_2})}{(x_1-x_2)^2}\nn\\
+\text{polynomial}&.\label{reduction of bad powers, 4}
\end{align}
Likewise,
\begin{align}
\frac{1}{8}&y_1y_2\frac{\Theta_1^{[1]}+\Theta_2^{[1]}}{(x_1-x_2)^2}\nn\\
&=-\frac{c}{32}y_1y_2\left(\frac{\Pi^{[1]}(\sqrt{x_1x_2})}{(x_1-x_2)^2} 
+\frac{1}{2}(x_1+x_2)\frac{\Pi^{[x]}(\sqrt{x_1x_2})}{(x_1-x_2)^2}\right)\nn\\
&\quad+\text{polynomial}.\label{reduction of bad powers, 6}
\end{align}
Let $r,s$ be polynomials in the only one variable $x$. 
Then we have
\begin{align}
\frac{r_1s_1+r_2s_2}{(x_1-x_2)^2}&=\frac{r_1s_2+r_2s_1}{(x_1-x_2)^2}+\text{polynomial}\:\label{reduction of bad powers, 1}.
\end{align}
Thus
\begin{align}\label{p1 Theta2+p2 Theta1 over (x1-x2) to the -2}
\frac{1}{8}\frac{p_1\Theta_1+p_2\Theta_2}{(x_1-x_2)^2}
=\frac{1}{8}\frac{p_1\Theta_2+p_2\Theta_1}{(x_1-x_2)^2}+\text{polynomial}.
\end{align}
(\ref{reduction of bad powers, 1}) generalises to terms including $y_i$ as 
\begin{align*}
\frac{y_1r_1+y_2r_2}{(x_1-x_2)^2}
=\frac{y_1r_2+y_2r_1}{(x_1-x_2)^2}
+\frac{p_1-p_2}{x_1-x_2}\:\frac{r_1-r_2}{x_1-x_2}\:\frac{1}{y_1+y_2}\:.
\end{align*}
Thus
\begin{align*}
&\frac{1}{16}\frac{p_1+2y_1y_2+p_2}{(x_1-x_2)^2}(y_1\Theta_1^{[y]}+y_2\Theta_2^{[y]})\nn\\
=&\frac{1}{16}\frac{p_1+2y_1y_2+p_2}{(x_1-x_2)^2}\:(y_1\Theta_2^{[y]}+y_2\Theta_1^{[y]})+\text{polynomial},
\end{align*}
and Lemma \ref{Lemma expressions for the sum of peven and podd} yields 
\begin{align}
&\frac{1}{16}\frac{p_1+p_2}{(x_1-x_2)^2}\:\left(y_1\Theta_2^{[y]}+y_2\Theta_1^{[y]}\right) 
=\nn\\
&\frac{1}{8}\left(y_1\Theta_2^{[y]}+y_2\Theta_1^{[y]}\right)\times\nn\\
&\quad\quad\quad\quad\times\left(\frac{p^{[1]}(\sqrt{x_1x_2})}{(x_1-x_2)^2}
+\frac{1}{2}(x_1+x_2)\frac{p^{[x]}(\sqrt{x_1x_2})}{(x_1-x_2)^2}\right)\nn\\
&+\text{polynomial}.\label{reduction of bad powers, 5}
\end{align}
This proves eq.\ (\ref{reduction to the rational function of the claim}). 
Note that this result implies that in the finite region, 
$R(x_1,x_2)$ has the correct singularities. It remains to correct its behaviour for large $|x|$.
\item
We first subtract all terms from $R$ which are of non-admissible order in $x_1$.
These depend polynomially on $x_2$ because this is true for $\left[(x_1-x_2)^{-\ell}\right]_{>k}$ with $\ell\in\N$,
$k\in\Z$, ($x_1$ large), and may depend on $y_2$.   
The result may still be degree violating in $x_2$. 
Thus the corrected rational function reads 
\begin{align*}
&R-\left[R\right]_{>n-3}-\left[R-\left[R\right]_{>n-3}\right]^{>n-3}\\
=&R-\left[R\right]_{>n-3}-\left[R\right]^{>n-3}+\left[\left[R\right]_{>n-3}\right]^{>n-3}.
\end{align*}
Since the subtractions could be done in a different order,
the procedure only works due to  
\begin{align}\label{symmetry condition}
\left[\left[R\right]_{>n-3}\right]^{>n-3}=\left[\left[R\right]^{>n-3}\right]_{>n-3}.
\end{align}
The connected two-point function is thus determined up to addition of a polynomial $P(x_1,x_2,y_1,y_2)$
of the form (\ref{definition of the polynomial which is specific to the state}) which is specific to the state. 
The degree and symmetry requirements for $P(x_1,x_2,y_1,y_2)$ are immediate.

For clarity, we first list the terms contained in $-\left[R\right]_{>n-3}$ resp.\ $-y_1\left[R\right]_{>\frac{n}{2}-3}$:
From (\ref{reduction of bad powers, 4}),
\begin{align}
&-\frac{3c}{64}y_1y_2\left[x_1\1\frac{\frac{1}{\sqrt{x_1x_2}}p^{[x]'}(\sqrt{x_1x_2})}{(x_1-x_2)^2}\right]_{>\frac{n}{2}-3},
\label{correction line 3}\\
&-\frac{c}{64}y_1y_2\left[x_1\1\frac{p^{[x]''}(\sqrt{x_1x_2})}{(x_1-x_2)^2}\right]_{>\frac{n}{2}-3},
\label{correction line 9}
\end{align}
from (\ref{reduction of bad powers, 6}),
\begin{align}
&\frac{c}{64}y_1y_2\left[x_1\frac{\Pi^{[x]}(\sqrt{x_1x_2})}{(x_1-x_2)^2}\right]_{>\frac{n}{2}-3}\:,
\label{correction line 11}
\end{align}
from (\ref{p1 Theta2+p2 Theta1 over (x1-x2) to the -2}),
\begin{align}
&-\frac{1}{8}\Theta_2\left[\frac{p_1}{(x_1-x_2)^2}\right]_{>n-3}\:,
\label{correction line 4}
\end{align}
and from (\ref{reduction of bad powers, 5}),
\begin{align}
&\frac{1}{8}y_1\:\Theta_2^{[y]}\left[\frac{p^{[1]}(\sqrt{x_1x_2})}{(x_1-x_2)^2}\right]_{>\frac{n}{2}-3},
 \label{correction line 6}\\
&\frac{1}{16}\:y_1\Theta_2^{[y]}\left[x_1\frac{p^{[x]}(\sqrt{x_1x_2})}{(x_1-x_2)^2}\right]_{>\frac{n}{2}-3},
\label{correction line 7,1}\\
&\frac{1}{16}\:y_1x_2\Theta_2^{[y]}\left[\frac{p^{[x]}(\sqrt{x_1x_2})}{(x_1-x_2)^2}\right]_{>\frac{n}{2}-3}.
 \label{correction line 7,2}
\end{align}
Now we give the full explicit expression for
\begin{displaymath}
-\left[R\right]_{>n-3}-\left[R\right]^{>n-3}+\left[\left[R\right]_{>n-3}\right]^{>n-3}\:. 
\end{displaymath}
(\ref{correction line 3}) and (\ref{correction line 9}) yield
\begin{align*}
\frac{c}{64}y_1y_2(n^2-1)a_0x_1^{\frac{n}{2}-\frac{3}{2}}x_2^{\frac{n}{2}-\frac{5}{2}}
=-\frac{1}{8}y_1y_2A_0x_1^{\frac{n}{2}-\frac{3}{2}}x_2^{\frac{n}{2}-\frac{5}{2}}\:,
\end{align*}
which is cancels against the term we obtain from (\ref{correction line 11}).
For odd $n$, $A_0=-\frac{c}{8}(n^2-1)a_0\1$, 
so (\ref{correction line 4}) yields
\begin{align*}
-\frac{1}{8}a_0\left(x_1^{n-2}\Theta_2+x_2^{n-2}\Theta_1\right)
-\frac{c}{64}(n^2-1)a_0^2x_1^{n-2}x_2^{n-2}.
\end{align*}
(\ref{correction line 6}) yields
\begin{align*}
&\frac{1}{8}y_1a_1x_1^{\frac{n}{2}-\frac{5}{2}}x_2^{\frac{n}{2}-\frac{1}{2}}\Theta_2^{[y]}
+\frac{1}{8}y_2a_1x_1^{\frac{n}{2}-\frac{1}{2}}x_2^{\frac{n}{2}-\frac{5}{2}}\Theta_1^{[y]}\:.
\end{align*}
(\ref{correction line 7,1}) yields:
\begin{align*}
&\frac{1}{16}y_1a_0x_1^{\frac{n}{2}-\frac{3}{2}}x_2^{\frac{n}{2}-\frac{1}{2}}\Theta_2^{[y]}
+\frac{1}{16}y_2a_0x_1^{\frac{n}{2}-\frac{1}{2}}x_2^{\frac{n}{2}-\frac{3}{2}}\Theta_1^{[y]}\:,\\ 
 &\frac{1}{8}y_1a_0x_1^{\frac{n}{2}-\frac{5}{2}}x_2^{\frac{n}{2}+\frac{1}{2}}\Theta_2^{[y]}
 +\frac{1}{8}y_2a_0x_1^{\frac{n}{2}+\frac{1}{2}}x_2^{\frac{n}{2}-\frac{5}{2}}\Theta_1^{[y]}\:,\\ 
 &\frac{1}{16}y_1a_2x_1^{\frac{n}{2}-\frac{5}{2}}x_2^{\frac{n}{2}-\frac{3}{2}}\Theta_2^{[y]}
 +\frac{1}{16}y_2a_2x_1^{\frac{n}{2}-\frac{3}{2}}x_2^{\frac{n}{2}-\frac{5}{2}}\Theta_1^{[y]}\:.
\end{align*}
 (\ref{correction line 7,2}) yields:
 \begin{align*}
 &\frac{1}{16}y_1a_0x_1^{\frac{n}{2}-\frac{5}{2}}x_2^{\frac{n}{2}+\frac{1}{2}}\Theta_2^{[y]}
 +\frac{1}{16}y_2a_0x_1^{\frac{n}{2}+\frac{1}{2}}x_2^{\frac{n}{2}-\frac{5}{2}}\Theta_1^{[y]}.
 \end{align*}
\end{enumerate}
Since all terms are symmetric w.r.t.\ interchange of $x_1$ and $x_2$, 
eq.\ (\ref{symmetry condition}) has been verified.
This completes the proof.

\pagebreak

\end{document}